\newcommand*{\mailto}[1]{\href{mailto:#1}{\nolinkurl{#1}}}
\definecolor{darkgreen}{rgb}{0.5,0.25,0}
\definecolor{darkblue}{rgb}{0,0,1}
\definecolor{answerblue}{rgb}{0,0,0.75}
\newcommand{\ep}{\varepsilon}
\renewcommand{\ge}{\geqslant}
\renewcommand{\le}{\leqslant}
\renewcommand{\d}{\mathrm{d}}
\newcommand{\R}{\mathbb{R}}
\newcommand{\abs}[1]{\left | #1 \right |}
\newcommand{\norm}[1]{\left \| #1 \right \|}
\newcommand{\bk}[1]{ \left(  #1 \right)}
\theoremstyle{theorem}
\newtheorem{thm}{Theorem}[section]
\newtheorem{prop}[thm]{Proposition}
\newtheorem{lem}[thm]{Lemma}
\newtheorem{cor}[thm]{Corollary}
\theoremstyle{definition}
\newtheorem{defin}{Definition}[section]
\theoremstyle{remark}
\newtheorem{rem}{Remark}[section]
\title[sewing and splitting-up for RDEs]
{Convergence rate in the splitting-up method for rough differential equations}
\author[Pang]{Peter H.C. Pang}
\address[P.H.C. Pang]{Department of Mathematics\\
   University of Oslo\\
  NO-0316 Oslo\\ Norway}
\email{\mailto{ptr@math.uio.no}}
\keywords{integration, operator splitting, rough paths}
\subjclass[2020]{60L20, 
60H35, 
65C30
}
\date{\today}
\thanks{The author is grateful for the support of 
	the Norwegian Research Council via 
	the project {\em INICE} (301538).}
\begin{document}
\begin{abstract} 
In this note we construct solutions to 
rough differential equations $\d Y = f(Y) \,\d X$ 
with a driver $X \in C^\alpha([0,T];\R^d)$, 
$\frac13 < \alpha \le \frac12$, using a 
splitting-up scheme. We show convergence of 
our scheme to solutions in the sense of Davie 
by a new argument and give a rate of convergence.
\end{abstract}
\maketitle

\section{Introduction}
\subsection{Rough differential equations}

Suppose $X \in C^\alpha([0,T];\R^d)$ for a 
fixed $\alpha \in (1/3,1/2]$ and consider 
the rough differential equation 
\begin{align}\label{eq:rde1}
\d Y = f(Y) \,\d X, 
\end{align}
where $f$ is a fixed function in $C^{\gamma -1}_b(\R^d)$, 
and $\gamma > \frac1\alpha \ge 2$. 
Standard rough paths theory \cite[Ch. 4, 8]{FH2020} informs us 
that in order to make sense of this equation, 
we must include additional information in 
the form of the area 
$$
 \int_s^t X_{s,r}\,\d X_r := \mathbb{X}_{s,t}, 
$$
satisfying $\sup_{s \not= t} |\mathbb{X}_{s,t}|/|t - s|^{2 \alpha} < \infty$ 
and Chen's relation. 
In this way, the augmented problem 
$$
\d Y = f(Y) \,\d {\bf X}, \qquad {\bf X} := (X,\mathbb{X})
$$
can be understood via a sewing lemma 
to be asking for $Y$ such that 
$$
Y_{s,t} = \int_s^t f(Y_r)\,\d {\bf X}_r := 
\lim_{\norm{P} \downarrow 0} \sum_{t_i \in P } f(Y_{t_i}) X_{t_i,t_{i + 1}} 
	+  f(Y_{t_i}) f'(Y_{t_i}) \mathbb{X}_{t_i,t_{i + 1}},
$$
for every $s,t \in [0,T]$, and $P$ are finite partitions 
of $[s,t]$ with mesh size $\le \norm{P}$. 
The problem being thus understood, 
it can be solved via Picard iterations. In this note, we 
revisit the problem of existence of solutions to the 
RDE \eqref{eq:rde1} by considering a computational way 
of coming to assign a meaning and a solution to 
\eqref{eq:rde1}, similar to the classical paper 
\cite{Dav2008} (see also \cite[Section 4]{GL1997}).

\subsection{Operator splitting}

One fruitful way 
of constructing numerical schemes for 
(partial) differential equations is known 
as the {\em operator splitting}, or {\em fractional steps} method. 
A careful introduction to this in the context 
of deterministic PDEs can be found in \cite{HKLR2010}.
Consider a problem of the form 
$$
\frac{\d Y}{\d t} = \mathcal{A}(Y),
$$
where $\mathcal{A}$ is an operator 
that can be deomposed into a sum of 
simpler operators $\mathcal{A} 
	= \mathcal{A}_1 + \mathcal{A}_2$, 
(often) exhibiting qualitatively different behaviours. 
Let $\mathcal{S}_t$ be the corresponding 
solution operator so that a solution can be 
written as $Y(t) = \mathcal{S}_t \overline{Y}$ 
for an initial condition $\overline{Y}$. 
 For $j = 1,2$, let $\mathcal{S}_t^j$ be the solution 
operator for the equation
\begin{align*}
\frac{\d Y^j}{\d t} = \mathcal{A}_j(Y^j).
\end{align*}
It stands to reason that the solution $Y$ can 
be approximated by propagating the initial 
condition $\overline{Y}$ repeatedly using 
$\mathcal{S}_{{h}}^2\mathcal{S}_{{h}}^1$ 
for small time steps ${h}$, culminating in a 
Trotter formula of the form:
\begin{align*}
\mathcal{S}_t = \lim_{n \to \infty}
	\Big(\mathcal{S}_{t/n}^2 \mathcal{S}_{t/n}^1\Big)^n.
\end{align*}
Similar to a sewing lemma, such a formula 
tells us that on small time steps, 
the integral $\mathcal{S}_{{h}}\overline{Y}$ of the full 
problem can be well approximated by the 
composed integral $\mathcal{S}_{{h}}^2\mathcal{S}_{{h}}^1\overline{Y}$.

Let $N = [T/h]$ be a large integer. 
For $0 \le j \le N$, set $t_j := j {h}$. 
Turning again to the differential equation \eqref{eq:rde1}, 
we can consider the following approximating system
which contains the second order information $Z$ as 
an interlacing step:
\begin{equation}\label{eq:apsystem}
\begin{aligned}
Y^{h,1}_t &= Y^{h,2}_{t_j^-}
	+ f(Y^{h,1}_{t_j^-}) X_{t_j,t}, \quad t \in (t_j, t_{j + 1}],\\
Y^{h,2}_t &= Y^{h,1}_{t_{j + 1}}
	+ Z(Y^{h,1}_{t_{j + 1}})_{t_j ,t}, \quad t \in [t_j, t_{j + 1}),
\end{aligned}
\end{equation}
and appended with the initial condition $Y^{h,2}_0 = \overline{Y} \in \R^d$. 
(Since the standard notation in rough paths 
theory is to put two temporal subscripts to 
indicate the difference $X_{t,s} = X(t) - X(s)$, 
as well as for more general functions taking 
in two temporal arguments, we emphasise that 
$Z$ takes in three arguments, and $Z(Y_{t})_{r,s}$ 
does not mean a difference $Z(Y_t)_s - Z(Y_t)_r$.)
For a functon $g$ continuous on $(t_j, t_{j + 1})$, 
let $g_{t_j^+}$ denote the limit to $t_j$ from above. 
From the continuity of $X$, we find that 
\begin{align*}
Y^{h,1}_{t_j^+} &= Y^{h,2}_{t_j^-}, \qquad \text{and similarly, }\qquad
Y^{h,2}_{t_j} = Y^{h,1}_{t_{j + 1}}, 
\end{align*}
assuming $Z(\cdot)_{t,t} \equiv 0$.

Associated with $Y^{h,i}$, $i =1,2$, is an 
auxiliary solution $Y^h$ that runs at twice the rate, 
and has the advantage of being continuous at 
the grid points $t_j$ (at least, if $Z$ is continuous 
in its third argument). For $t \in (t_j,t_{j + 1}]$, we define:
\begin{align}\label{eq:joinedup}
Y^{h}_t := 
\begin{cases}
	Y^{h,1}_{t_j + 2(t - t_j)} & t \in (t_j, t_{j + 1/2}]\\
	Y^{h,2}_{t_j + 2(t - t_{j + 1/2})} & t \in (t_{j + 1/2}, t_{j + 1}]
\end{cases},
\end{align}
where $t_{j + 1/2} := t_j + \frac12 {h}$. 
We have $Y^h_{t_j} = Y^{h,1}_{t_j^+} = Y^{h,2}_{t_j^-}$. 
From the first equation of \eqref{eq:apsystem}, 
$Y^{h,1}$ is as smooth as $X$ 
in the interior of each interval of the partition. 
We can therefore hope that $Y^{h}$converges to a 
function in $C^\alpha([0,T];\R^d)$ that can be 
understood as a solution to \eqref{eq:rde1}. 
Following \cite{Dav2008}, we define solutions as follows:
\begin{defin}\label{def:solution}
A function $Y \in C^\alpha([0,T];\R^d)$ is a solution to 
the RDE \eqref{eq:rde1} if  for any $0 \le s \le t \le T$,
\begin{align*}
Y_{s,t} -  f(Y_s) X_{s,t} - Z(Y_s)_{s,t} = o(\abs{t - s}).
\end{align*}
\end{defin}

We delineate algebraic and analytic conditions on $Z$ 
under which $Y^{h}$ indeed converges 
and carry out the convergence argument in 
Section \ref{sec:wellposed}. 

The application of the splitting method to stochastic 
equations $\d u = F \,\d t + G \,\d W$ has also been widely 
studied (see, e.g., \cite{FO2011,GK2003,KS2018}). 
An intuitive way to split up the problem is to 
consider $\d u = F\,\d t $ and $\d u = G\,\d W$ on separate 
time-steps.

\subsection{Numerical analysis of RDEs}

Computational methods, especially explicit 
methods, of solving \eqref{eq:rde1} 
are very natural in light of the sewing lemma, and 
have been studied by various authors, including recently,  
\cite{BBL2023,RR2022}. The associated literature 
on computational methods for SDEs is vast. 
In the standard reference \cite[Chapter 10]{FV2016}, 
a thorough study was made of local higher order 
Euler methods. Our construction is reminescent of the 
classical work \cite{Dav2008} on which \cite[Chapter 10]{FV2016}
is also based. All of our calculations can be extended without 
change to the case where $Y$ takes values in 
$\R^n$, $X$ takes values in $\R^d$, and $f$ 
takes values in $\R^{n \times d}$.

In \cite{Dav2008}, Davie constructed solutions to 
\eqref{eq:rde1} using a Milstein scheme, or a 
second-order Euler scheme, using the area $\mathbb{X}$. 
Following \cite{Lyo1998}, driving paths 
$X$ of $p$-variation were considered in \cite{Dav2008}, 
with $2 \le p < 3$. Davie showed the more refined 
result that there is 
uniqueness only holds generally for $f \in C^p$. 
Two steps of a splitting scheme for ODEs 
added together is a slightly shifted Milstein scheme, 
interpolated between grid-points. 
We offer a different argument of convergence 
via the splitting scheme, which also provides a rate of convergence.

As earlier alluded to, our splitting scheme, 
like Davie's Milstein scheme, is explicit. As such it 
cannot witness non-uniqueness when $\gamma < 
	\frac1\alpha + 1$. 
It may of interest in subsequent work to analyse the 
convergence of a splitting scheme that is implicit in 
its first equation in subsequent work.

\section{Convergence and existence}\label{sec:wellposed}

\subsection{Assumptions on $Z$}
As indicated, the primary object of this note 
is to use assumptions on $Z$ to 
close estimates and give us convergence 
of approximating solutions to the operator 
splitting scheme in $C^\alpha([0,T];\R^d)$ 
as $h \downarrow 0$. We house our 
assumptions here and shall show their sufficiency 
in forthcoming calculations. 

\begin{enumerate}

\item 
For every $0 \le s \le t \le T$, uniformly in $x \in \R^d$, 
\begin{align}\label{eq:Z_st}
\abs{Z(x)_{s,t}} \lesssim \abs{t - s}^{2 \alpha}.
\end{align}

\item 
For every $0 \le s \le t \le T$, $x,y \in \R^d$, 
\begin{align}\label{eq:Z_xy}
\abs{Z(x)_{s,t} - Z(y)_{s,t}} \lesssim \abs{x - y}^{\gamma - 2} \abs{t - s}^{2 \alpha}.
\end{align}

\item 
For every $0 \le s \le u\le t \le T$, uniformly in $x \in \R^d$, 
define $\delta Z(x)_{s,u,t} := Z(x)_{s,t} - Z(x)_{s,u} - Z(x)_{u,t}$ as usual; 
we require
\begin{equation}\label{eq:Z_sut}
\begin{aligned}
&\Big|\delta Z(x)_{s,u,t} 
	- \big( \nabla f(x)  f(x) X_{s,u}\otimes X_{u,t}
	+ \nabla f(x) Z(x)_{s,u} X_{u,t}
	\big)\Big|\lesssim \abs{t - s}^{3\alpha}.
\end{aligned}
\end{equation}
\end{enumerate}

Traditionally, $Z(x)_{s,t} = f(x)  \nabla f(x) \mathbb{X}_{s,t} $.
This is now replaced by the analytic conditions 
\eqref{eq:Z_st} and \eqref{eq:Z_xy}.
The algebraic condition  \eqref{eq:Z_sut} matches 
the expected coboundary operation 
$\delta  Z_{s,u,t}$, 
with 
\begin{align*}
\delta  Z_{s,u,t}
	= f(x) \nabla f(x) X_{s,u} \otimes X_{u,t},
\end{align*}
being given by the first subinterval $[s,u]$ 
on the time step $[s,t]$ on the discrete level \cite{Gub2004}. 
Given the bound \eqref{eq:Z_st}, the final term 
$\nabla f(x) Z(x)_{s,t} X_{u,t}$ in the parentheses 
on the left of 
\eqref{eq:Z_sut} is aesthetic. 
Our calculations 
shall reveal that in fact, we can slightly relax $2\alpha$ in 
the exponent of $\abs{t - s}$ on 
the right of \eqref{eq:Z_st} 
(and the $3\alpha$ of \eqref{eq:Z_sut}) above to  
$(\gamma - 1) \alpha$ (resp., $\gamma \alpha$),  
and still arrive at the conclusions of Lemma 
\ref{thm:Yhh2_comparison},  Corollary 
\ref{thm:cauchy}, and Section \ref{sec:sew} below. 

\subsection{Comparision of $Y^h$ and $Y^{h/2}$ 
	at the same time point}\label{sec:sametime}
We first compare $Y^h$ and $Y^{h/2}$ in 
$C_b([0,t];\R^d)$, seeking an estimate in terms of $h$. 
We now show that:
\begin{prop}\label{thm:Yhh2_comparison}
Let $f \in C^{\gamma - 1}_b(\R)$ with $3 \ge \gamma > \frac1\alpha$. 
Let $Y^h$ and $Y^{h/2}$ be recursively defined via 
\eqref{eq:apsystem} and \eqref{eq:joinedup}. 
For a sufficiently small $h$, we have the estimate:
\begin{align}\label{eq:unif_estm_r}
\sup_{t \in [0,T]} \big|Y^h_t - Y^{h/2}_t\big| = O(h^{\gamma \alpha - 1}).
\end{align}
\end{prop}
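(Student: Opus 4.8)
The plan is to compare the two schemes step-by-step over the coarse grid $\{t_j\}$ of mesh $h$, exploiting the fact that $Y^{h/2}$ has two update cycles inside each coarse interval $[t_j, t_{j+1}]$ while $Y^h$ has one. First I would set $e_j := Y^h_{t_j} - Y^{h/2}_{t_j}$ and derive a discrete Gronwall-type recursion of the form $|e_{j+1}| \le (1 + C h)|e_j| + (\text{local error})$, so that summing over the $N = [T/h]$ coarse steps yields $\sup_j |e_j| \lesssim N \cdot (\text{local error})$, and then upgrade the bound at grid points to a bound at all $t$ by the within-interval regularity of the schemes (which is as smooth as $X$, hence $C^\alpha$, between grid points). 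The Lipschitz-in-$x$ propagation constant $1 + Ch$ comes from the regularity of $f$ (via $f \in C^{\gamma-1}_b$, so $f$ and $\nabla f$ are bounded and $\nabla f$ is $(\gamma-2)$-Hölder) together with the contraction estimate \eqref{eq:Z_xy} for $Z(x)_{s,t} - Z(y)_{s,t}$; since $\gamma \alpha - 1 \le 1$, the accumulated exponential factor $e^{CT}$ is harmless.

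The heart of the matter is the local error: over a single coarse interval $[t_j, t_{j+1}]$, starting both schemes from the \emph{same} point $y$, I must show
\begin{align*}
\big| \Phi^h(y)_{t_j,t_{j+1}} - \Phi^{h/2}(y)_{t_j,t_{j+1}} \big| \lesssim h^{\gamma\alpha},
\end{align*}
where $\Phi^h(y)_{t_j,t_{j+1}}$ denotes the increment produced by one coarse step of the $h$-scheme and $\Phi^{h/2}(y)_{t_j,t_{j+1}}$ the increment produced by the two half-steps of the $(h/2)$-scheme, both issued from $y$. One coarse step of \eqref{eq:apsystem}–\eqref{eq:joinedup} from $y$ produces, at $t_{j+1}$, the increment $f(y) X_{t_j,t_{j+1}} + Z(f(y) X_{t_j,t_{j+1/2}} + y)_{t_j,t_{j+1}}$ — roughly $f(y)X_{t_j,t_{j+1}} + Z(y)_{t_j,t_{j+1}}$ up to corrections controlled by \eqref{eq:Z_xy} and the Hölder bound on $X$. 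The two half-steps produce the composition of two such increments on $[t_j, t_{j+1/2}]$ and $[t_{j+1/2}, t_{j+1}]$. Expanding the composition and Taylor-expanding $f$ and $Z$ about $y$, the discrepancy between one coarse step and two half-steps is, to leading order, exactly the combination
\begin{align*}
\delta Z(y)_{t_j, t_{j+1/2}, t_{j+1}} - \big( \nabla f(y) f(y) X_{t_j,t_{j+1/2}} \otimes X_{t_{j+1/2},t_{j+1}} + \nabla f(y) Z(y)_{t_j,t_{j+1/2}} X_{t_{j+1/2},t_{j+1}} \big),
\end{align*}
which is precisely the quantity that the algebraic condition \eqref{eq:Z_sut} bounds by $|t_{j+1}-t_j|^{3\alpha} = h^{3\alpha}$. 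All remaining terms in the expansion are genuinely higher order: the $f$-Taylor remainder contributes $\|\nabla f\|_{(\gamma-2)\text{-H\"ol}} \, |X_{t_j,t_{j+1}}|^{\gamma-1} \lesssim h^{(\gamma-1)\alpha}$, and cross terms involving $Z$ against $X$-increments or $Z$ against $Z$ contribute $h^{2\alpha} \cdot h^{2\alpha}$ and $h^{\alpha}\cdot h^{2\alpha}$-type bounds via \eqref{eq:Z_st} and \eqref{eq:Z_xy}; since $\gamma \le 3$ we have $\gamma\alpha \le 3\alpha \le (\gamma-1)\alpha + \alpha$ and $\gamma\alpha \le 4\alpha$, $3\alpha$, so all these dominate $h^{\gamma\alpha}$ and the local error is $O(h^{\gamma\alpha})$. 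Multiplying by $N \sim h^{-1}$ gives the claimed $O(h^{\gamma\alpha - 1})$.

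The main obstacle I anticipate is bookkeeping the Taylor expansion of the \emph{composition} of the two half-step maps carefully enough to isolate the $\delta Z$ combination in \eqref{eq:Z_sut} cleanly, while simultaneously tracking that the argument fed into $Z$ in \eqref{eq:apsystem} is $Y^{h,1}_{t_{j+1}}$ (not $y$ itself), so that every substitution of a perturbed base point is paid for using \eqref{eq:Z_xy} and the $(\gamma-2)$-Hölder continuity of $\nabla f$ — and checking that each such payment is of order at least $h^{\gamma\alpha}$. A secondary, more routine point is the passage from the grid-point estimate to the uniform-in-$t$ estimate \eqref{eq:unif_estm_r}: on a half-interval both $Y^h$ and $Y^{h/2}$ move by $O(h^\alpha)$, which is already $O(h^{\gamma\alpha-1})$ when $\gamma\alpha - 1 \le \alpha$, i.e. $\gamma \le 1 + 1/\alpha$; for $\gamma$ up to $3$ one must instead compare the within-interval trajectories directly, noting they are driven by the \emph{same} path $X$ from nearby starting points, so their difference on a sub-interval is controlled by $|e_j|$ plus the same local-error terms, giving the stated bound.
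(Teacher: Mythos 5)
Your local (consistency) analysis is essentially the paper's: comparing one coarse step with two half-steps, identifying the leading discrepancy as exactly the combination that \eqref{eq:Z_sut} controls, and paying for every shift of the base point in $Z$ with \eqref{eq:Z_xy} at cost $O(h^{\gamma\alpha})$ (this is the paper's \eqref{eq:obs2}). Two slips there are minor but worth fixing: the argument fed into $Z$ after the first half of the coarse step is $y+f(y)X_{t_j,t_{j+1}}$ (the scheme runs at twice the speed), not $y+f(y)X_{t_j,t_{j+1/2}}$; and the $f$-Taylor remainder must be recorded together with the driver increment it multiplies --- by itself it is of size $h^{(\gamma-1)\alpha}$, which is \emph{larger} than $h^{\gamma\alpha}$, and only the product $h^{(\gamma-1)\alpha}\cdot h^{\alpha}=h^{\gamma\alpha}$ (the paper's $A_2$ bound \eqref{eq:A2_estm}) is of the right order, so your exponent bookkeeping in that sentence does not stand as written.

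The genuine gap is the stability/accumulation step. The recursion you can actually derive is the paper's \eqref{eq:iteration}: $|e_{j+1}|\le |e_j|+C|e_j|h^{\alpha}+C|e_j|^{\gamma-2}h^{2\alpha}+Ch^{\gamma\alpha}$. The per-step amplification is $1+Ch^{\alpha}$, not $1+Ch$, because the error at $t_j$ is multiplied by $|X_{t_j,t_{j+1}}|\sim h^{\alpha}$; and \eqref{eq:Z_xy} is only $(\gamma-2)$-H\"older in the base point, so it yields the non-Lipschitz term $|e_j|^{\gamma-2}h^{2\alpha}$ rather than anything of the form $Ch|e_j|$. Hence the accumulated factor over $N\sim T/h$ steps is not $e^{CT}$ but of order $\exp\big(CTh^{\alpha-1}\big)$, and the conclusion $\sup_j|e_j|\lesssim N\cdot(\text{local error})$ does \emph{not} follow from a crude Gronwall argument; the sentence ``the accumulated exponential factor $e^{CT}$ is harmless'' is precisely the point that needs proof. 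The paper closes this by induction on $j$ with the ansatz $|\Delta Y_{t_k}|\le 2Ckh^{\gamma\alpha}$: reinserting the ansatz, the H\"older term gains the exponent $\ep\alpha$ with $\ep=(\gamma-1)(\gamma-2)>0$ (this is where $\gamma>\frac1\alpha\ge 2$ is used), and the remaining terms are absorbed for $h$ small, so each step adds only $O(h^{\gamma\alpha})$ to the bound. Some mechanism of this kind --- exploiting that the accumulated error itself is already $O(h^{\gamma\alpha-1})$, together with the exponent surplus $\ep$ --- is the missing idea in your proposal. Your final interpolation worry, by contrast, is moot: in the standing regime $\gamma\le 3$, $\alpha\le\frac12$ one always has $\gamma\alpha-1\le 3\alpha-1\le\alpha$, so the $O(h^{\alpha})$ within-interval oscillation is automatically $O(h^{\gamma\alpha-1})$, which is exactly how the paper concludes.
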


\begin{proof}
We will treat only the case $\gamma \le 3$, 
as the alternative can be treated as if $\gamma  = 3$. 
From \eqref{eq:apsystem}, 
\begin{align*}
Y^h_{t_{j + 1}} = Y^h_{t_j} + f(Y^h_{t_j}) X_{t_j,t_{j + 1}}
	+ Z(Y^h_{t_{j + 1/2}})_{t_j,t_{j + 1}}.
\end{align*}
Since $t_{j + 1} - t_j =  h = 2r$, $Y^{h/2}$ runs through 
two time steps of size $r$ over $[t_j,t_{j + 1}]$:
\begin{align*}
Y^{h/2}_{t_{j + 1}} &= Y^{h/2}_{t_j} + f(Y^{h/2}_{t_j}) X_{t_j,t_{j + 1/2}}
	+ Z(Y^{h/2}_{t_{j + 1/4}})_{t_j,t_{j + 1/2}}\\
	&\qquad \qquad\,\,+ f(Y^{h/2}_{t_{j + 1/2}}) X_{t_{j + 1/2},t_{j + 1}} 
	+ Z(Y^{h/2}_{t_{j + 3/4}})_{t_{j + 1/2}, t_{j + 1}}.
\end{align*}

Taking a difference gives us:
\begin{align}\label{eq:localerr}
&Y^h_{t_{j + 1}} - Y^{h/2}_{t_{j + 1}}
=  Y^h_{t_j} - Y^{h/2}_{t_j} + I_1 + I_2, 
\end{align}
where
\begin{align*}
I_1& :=  f(Y^h_{t_j}) X_{t_j,t_{j + 1}}
	- f(Y^{h/2}_{t_j})X_{t_j,t_{j + 1/2}}
	-  f(Y^{h/2}_{t_{j + 1/2}}) X_{t_{j + 1/2},t_{j + 1}}, \\
I_2& :=  Z(Y^h_{t_{j + 1/2}})_{t_j,t_{j + 1}} 
	- Z(Y^{h/2}_{t_{j + 1/4}})_{t_j,t_{j + 1/2}}
	-  Z(Y^{h/2}_{t_{j + 3/4}})_{t_{j + 1/2}, t_{j + 1}}.
\end{align*}

We write $I_1$ as the sum $A_1 + A_2 + A_3$, where
\begin{align}\label{eq:I1_dec}
\begin{aligned}
A_1& := \Big( f(Y^h_{t_j}) - f(Y^{h/2}_{t_j})\Big)  X_{t_j,t_{j + 1}},\\
A_2 &:= -\Big[ f(Y^{h/2})_{t_j,t_{j + 1/2}} 
	 - \nabla f(Y^{h/2}_{t_j}) \cdot
	Y^{h/2}_{t_j,t_{j + 1/2}} \Big]
		 X_{t_{j + 1/2},t_{j + 1}},\\
A_3 & := - \nabla f(Y^{h/2}_{t_j})\cdot 
 	Y^{h/2}_{t_j,t_{j + 1/2}} 
	 X_{t_{j + 1/2},t_{j + 1}}.
\end{aligned}
\end{align}

We leave $A_1$ be.
If  $\gamma \ge 3$, the term $A_2$ can be written as
\begin{align*}
A_2 &= - \frac12 \nabla^2 f(\xi) :
	Y^{h/2}_{t_j,t_{j + 1/2}} \otimes Y^{h/2}_{t_j,t_{j + 1/2}} 
		X_{t_{j + 1/2},t_{j + 1}},
\end{align*}
using Taylor's theorem, for a $\xi = Y^{h/2}_{t_j} +  \theta 
	\bk{Y^{h/2}_{t_{j + 1/2}} - Y^{h/2}_{t_j}}$ 
for some $\theta \in [0,1]$. By \eqref{eq:apsystem}, 
each of the terms $Y^{h/2}_{t_j,t_{j + 1/2}}$ 
can be expanded as
\begin{align*}
 f(Y^{h/2}_{t_j}) X_{t_j,t_{j + 1/2}} 
 	+  Z(Y^{h/2}_{t_{j + 1/4}})_{t_j,t_{j + 1/2}}.
\end{align*}
Using the assumptions 
$X \in C^\alpha$ and \eqref{eq:Z_st}, 
$A_2 \le \frac12 \norm{\nabla^2 f}_{L^\infty} 
	\bk{\norm{f}_{L^\infty} + C} h^{3\alpha}$.
In any case ($\gamma < 3$), 
\begin{align}\label{eq:A2_estm}
A_2 \le \frac12 \abs{f}_{C^{\gamma - 1}} 
	\bk{\norm{f}_{L^\infty} + C} h^{\gamma \alpha}.
\end{align}

Using \eqref{eq:Z_st} and $X \in C^\alpha$, 
for any $g \in C^1_b(\R^d)$, $|\hat{h}| \le h$, we always have
\begin{align*}
\big|g(Y^{h/2})_{t_j,t_j + \hat{h}} \big| \lesssim h^\alpha.
\end{align*}
which also implies via \eqref{eq:Z_xy} that 
\begin{align}\label{eq:obs2}
\big|Z(Y^{h/2}_{t_{j} + \hat{h}})_{s,t} - Z(Y^{h/2}_{t_j})_{s,t}\big|
\lesssim\abs{t - s}^{2\alpha} h^{(\gamma - 2)\alpha}.
\end{align}

For $A_3$, we get:
\begin{align*}
A_3 & = -f(Y^{h/2}_{t_j})\nabla f(Y^{h/2}_{t_j}) 
	X_{t_j,t_{j + 1/2}}\otimes
	X_{t_{j + 1/2},t_{j + 1}} \\
	& \quad\,\, - \nabla f(Y^{h/2}_{t_j}) 
	Z(Y^{h/2}_{t_j})_{t_j, t_{j + 1/2}}\otimes
	X_{t_{j + 1/2},t_{j + 1}}\\
	&\quad\,\, + \nabla f(Y^{h/2}_{t_j}) 
	\Big[Z(Y^{h/2}_{t_j})_{t_j, t_{j + 1/2}} 
	- Z(Y^{h/2}_{t_{j + 1/4}})_{t_j,t_{j + 1/2}}\Big]\otimes
	X_{t_{j + 1/2},t_{j + 1}}.
\end{align*}
The final summand above is $O(h^{(\gamma + 1)\alpha})$ 
by \eqref{eq:obs2}. Therefore, 
\begin{equation}\label{eq:A3_estm}
\begin{aligned}
A_3 & \overset{\eqref{eq:Z_sut}}{=}
	-\delta Z(Y^{h/2}_{t_j})_{t_j,t_{j + 1/2},t_{j + 1}}  + O(h^{\gamma \alpha}).
\end{aligned}
\end{equation}

We turn now to $I_2$. Using the observation 
in \eqref{eq:obs2} that shifting the argument in 
$Y^{h/2}$ incurs error of $O(h^{\gamma \alpha})$ for 
$Z(Y^{h/2})_{s,t}$, 
\begin{align}
I_2&\, = \delta Z(Y^{h/2}_{t_j})_{t_j,t_{j + 1/2},t_{j + 1}} 
	 + Z(Y^h_{t_j})_{t_j,t_{j + 1}} 
	-  Z(Y^{h/2}_{t_j})_{t_j,t_{j + 1}} + O(h^{\gamma \alpha})\notag\\
&  \overset{\eqref{eq:A3_estm}}{=} - A_3
	+ Z(Y^h_{t_j})_{t_j,t_{j + 1}} 
	-  Z(Y^{h/2}_{t_j})_{t_j,t_{j + 1}}
	+ O(h^{\gamma\alpha}).\label{eq:I2_estm}
\end{align}

 Let $\Delta Y_{t_j} := Y^h_{t_j} - Y^{h/2}_{t_j}$. 
Gathering $A_1$ from \eqref{eq:I1_dec}, 
$A_2$ from \eqref{eq:A2_estm}, 
$A_3$ from \eqref{eq:A3_estm}, 
and $I_2$ from \eqref{eq:I2_estm}, and 
inserting them into \eqref{eq:localerr}, 
we find that for a sufficiently big $C \ge 1$,
\begin{align}\label{eq:DeltaY_1}
\begin{aligned}
\big|\Delta Y_{t_j, t_{j + 1}}\big|
& \le   \big| \big(f(Y^h_{t_j}) - f(Y^{h/2}_{t_j})\big)
	 X_{t_j,t_{j + 1}} \\ &
	 \qquad\qquad+ Z(Y^h_{t_j})_{t_j,t_{j + 1}} 
	-  Z(Y^{h/2}_{t_j})_{t_j,t_{j + 1}} \big| + Ch^{\gamma \alpha}.
\end{aligned}
\end{align}
Estimating the absolute value on the right 
of \eqref{eq:DeltaY_1}, with \eqref{eq:obs2} and $f \in C^1$, we get
\begin{align*}
&\big| f(Y^h_{t_j}) - f(Y^{h/2}_{t_j})
	 X_{t_j,t_{j + 1}} + Z(Y^h_{t_j})_{t_j,t_{j + 1}} 
	-  Z(Y^{h/2}_{t_j})_{t_j,t_{j + 1}} \big|\\
& \qquad\qquad \qquad\qquad \qquad\qquad \qquad
	\le C\big|\Delta Y_{t_j}\big| h^\alpha  
	+ C \big|\Delta Y_{t_j}\big|^{\gamma - 2} h^{2\alpha}, 
\end{align*}
and \eqref{eq:DeltaY_1} implies 
\begin{align}\label{eq:iteration}
\begin{aligned}
\big|\Delta Y_{t_{j + 1}}\big| 
&\le \big|\Delta Y_{t_j, t_{j + 1}}\big| + \big|\Delta Y_{t_j}\big|\\
& \le    C\big|\Delta Y_{t_j}\big| h^\alpha  
	+ C \big|\Delta Y_{t_j}\big|^{\gamma - 2} h^{2\alpha} 
	+ Ch^{\gamma \alpha}  + \big|\Delta Y_{t_j}\big|.
\end{aligned}
\end{align}

Let us now argue by induction on the 
global error at time steps $t_k$ up to $k = j$. 
Suppose for a sufficiently small 
$h$ and  every $k \le j$, 
\begin{align*}
\abs{\Delta Y_{t_k}}	
	 \le  2C k h^{\gamma \alpha}.
\end{align*}	 
Conspiratorially, 
$\gamma^2 - 2\gamma + 2 > \gamma$ on 
$(\frac1\alpha, \infty) \subset (2,\infty)$, 
so for $\ep := \gamma^2 - 3 \gamma + 2 > 0$,
\begin{align*}
\big| \Delta Y_{t_{j + 1}}\big| 
	& \le 2C h^{(\gamma + 1)\alpha} 
	+ C(2C)^{\gamma - 2} h^{(\gamma + \ep)\alpha} 
	+ Ch^{\gamma \alpha} + 2 C j h^{\gamma \alpha}\\
	& = C(j + 1) h^{\gamma \alpha} 
	+ C \big( j + 2C h^\alpha + (2C)^{\gamma - 2} h^\ep\big) h^{\gamma \alpha}.
\end{align*}

Using the fact that $\Delta Y_0 = Y^h_0 - Y^{h/2}_0 = 0$, 
and supposing $h$ is small enough that 
$$
 2C h^\alpha + (2C)^{\gamma - 2} h^\ep \le 1
$$
completes the argument. We then have 
\begin{align*}
\max_{j}\abs{\Delta Y_{t_j}} 
\lesssim h^{\gamma \alpha - 1}.
\end{align*}
In the regime $\alpha \in (1/3, 1/2]$, we find 
$\gamma \alpha - 1 \le 3 \alpha - 1 \le \alpha$. 
Therefore, for $s \in (t_j, t_{j + 1})$ between 
time steps, using $\abs{Y^h_{s,t} }
	\lesssim \abs{t - s}^\alpha
\lesssim \abs{t - s}^{\gamma \alpha - 1}$ 
gives us \eqref{eq:unif_estm_r}.
\end{proof}

\subsection{Convergence rate in $C^\beta$}

Proposition \ref{thm:Yhh2_comparison} shows that  for $h_n = 2^{-n}$, 
the solutions $Y^{h_n}$ of the approximating systems 
form a Cauchy sequence in $L^\infty([0,T];\R^d)$, and 
also yields a convergence rate. Since
$Y^{h_n}$ are bounded in $C^\alpha([0,T];\R^d)$, the 
resulting limit is in $C^\alpha$. We briefly pursue convergence 
rates in $C^\beta$, $\beta < \alpha$.
\begin{cor}\label{thm:cauchy}
Let $Y^h$ be recursively defined via 
\eqref{eq:apsystem} and \eqref{eq:joinedup}. 
For any $\beta < \alpha$, set 
$\mu := (\alpha - \beta) \wedge (\gamma \alpha  -1)$.
The collection $\{Y^{2^{-n}}\}_{n = 1}^\infty$ 
form a Cauchy sequence in $C^\beta([0,T];\R^d)$ 
and for some constant indpendent of $n$, we have:
\begin{align*}
\abs{Y^{2^{-n}} - Y^{2^{-(n + 1)}}}_{C^\beta([0,T];\R^d)}  \le C 2^{-\mu n}.
\end{align*}
\end{cor}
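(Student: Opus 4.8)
The plan is to interpolate between the $L^\infty$ bound of Proposition~\ref{thm:Yhh2_comparison} and a uniform $C^\alpha$ bound on the family $\{Y^h\}$, using the elementary interpolation inequality for Hölder seminorms. First I would record the uniform bound: for every sufficiently small $h$, $\abs{Y^h}_{C^\alpha([0,T];\R^d)} \le M$ for a constant $M$ independent of $h$. This follows from \eqref{eq:apsystem} together with \eqref{eq:Z_st} and $X \in C^\alpha$ exactly as in the remark after \eqref{eq:A2_estm} (the estimate $\abs{g(Y^{h/2})_{t_j,t_j+\hat h}}\lesssim h^\alpha$), patched across grid points using the continuity of $Y^h$ at the $t_j$ and the fact that on each subinterval $Y^{h,1}$ inherits the modulus of $X$; a standard telescoping/chaining argument upgrades the one-step increments to a global $C^\alpha$ bound. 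In particular $\abs{Y^h - Y^{h/2}}_{C^\alpha} \le 2M$ for all small $h$.

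Next, fix $\beta < \alpha$ and $h = 2^{-n}$, and write $g_n := Y^{2^{-n}} - Y^{2^{-(n+1)}}$. For $0 \le s < t \le T$ I would split into two regimes. When $\abs{t-s}$ is comparable to or larger than $h^{?}$ (a threshold to be chosen), I bound
\begin{align*}
\frac{\abs{(g_n)_{s,t}}}{\abs{t-s}^\beta} \le \frac{2\norm{g_n}_{L^\infty}}{\abs{t-s}^\beta};
\end{align*}
when $\abs{t-s}$ is small, I bound
\begin{align*}
\frac{\abs{(g_n)_{s,t}}}{\abs{t-s}^\beta} = \abs{(g_n)_{s,t}}^{\beta/\alpha}\,\frac{\abs{(g_n)_{s,t}}^{1-\beta/\alpha}}{\abs{t-s}^\beta} \le \big(2M\,\abs{t-s}^\alpha\big)^{\beta/\alpha}\cdot\frac{(2\norm{g_n}_{L^\infty})^{1-\beta/\alpha}}{\abs{t-s}^\beta},
\end{align*}
so that in the small regime the quotient is $\lesssim \norm{g_n}_{L^\infty}^{1-\beta/\alpha}$. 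Optimising the threshold between the two regimes (i.e. balancing $\norm{g_n}_{L^\infty}\abs{t-s}^{-\beta}$ against $M^{\beta/\alpha}\norm{g_n}_{L^\infty}^{1-\beta/\alpha}$) yields the interpolation bound
\begin{align*}
\abs{g_n}_{C^\beta([0,T];\R^d)} \lesssim \norm{g_n}_{L^\infty}^{1-\beta/\alpha}\,\abs{g_n}_{C^\alpha([0,T];\R^d)}^{\beta/\alpha} \lesssim \big(h^{\gamma\alpha-1}\big)^{1-\beta/\alpha}.
\end{align*}
This exponent $(\gamma\alpha-1)(1-\beta/\alpha)$ is not quite $\mu = (\alpha-\beta)\wedge(\gamma\alpha-1)$, so I would then observe that a cruder but cleaner estimate suffices: bounding the $C^\beta$ seminorm by $\max\{2\norm{g_n}_{L^\infty}T^{\alpha-\beta}\cdot(\dots),\ (2M)^{\beta/\alpha}(2\norm{g_n}_{L^\infty})^{1-\beta/\alpha}\}$ and using $\norm{g_n}_{L^\infty}\le C h^{\gamma\alpha-1}$ with $h\le 1$, one gets a bound of the form $C\,h^{\nu}$ with $\nu$ at least $(\alpha-\beta)\wedge(\gamma\alpha-1)$ after absorbing constants — the point being that for $\abs{t-s}\ge 1$ (or $\ge$ any fixed length) the $\abs{t-s}^{-\beta}$ factor is harmless, while for $\abs{t-s}\le h$ one uses the trivial Lipschitz-in-$h$-type control, and for $h \le \abs{t-s}\le 1$ one interpolates. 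Summability of $\sum_n 2^{-\mu n}$ then gives the Cauchy property, and the limit inherits membership in $C^\alpha$ since $C^\alpha$ bounds are preserved under the (uniform, hence $C^\beta$) limit by lower semicontinuity of the seminorm.

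The main obstacle I anticipate is pinning down the exact exponent $\mu$ claimed in the statement: naive interpolation gives $(\gamma\alpha-1)(1-\beta/\alpha)$, which is strictly smaller than $\gamma\alpha-1$ and generally incomparable to $\alpha-\beta$, so the clean form $\mu=(\alpha-\beta)\wedge(\gamma\alpha-1)$ must come from a more careful two-regime split with the crossover placed at $\abs{t-s}\asymp h$ rather than at the interpolation-optimal scale — i.e. one should not optimise the threshold but fix it at the mesh size. For $\abs{t-s} \le h$ one argues directly that $Y^h$ and $Y^{h/2}$ differ by $O(\abs{t-s}^\alpha)$ on such a short interval combined with the $L^\infty$ bound, giving a $\abs{t-s}^\beta$-quotient of order $h^{\alpha-\beta} + h^{\gamma\alpha-1}\abs{t-s}^{-\beta}$; plugging $\abs{t-s}\le h$ into the latter is wrong direction, so instead for $\abs{t-s}\le h$ one uses $\abs{(g_n)_{s,t}}\le \abs{Y^h_{s,t}}+\abs{Y^{h/2}_{s,t}}\lesssim \abs{t-s}^\alpha$ directly, yielding quotient $\lesssim\abs{t-s}^{\alpha-\beta}\le h^{\alpha-\beta}$; and for $\abs{t-s}\ge h$ one uses the $L^\infty$ bound, yielding quotient $\lesssim h^{\gamma\alpha-1}h^{-\beta}=h^{\gamma\alpha-1-\beta}$. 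Since $\gamma\alpha - 1 - \beta$ may be smaller than both $\alpha-\beta$ and $\gamma\alpha-1$, one refines the large regime by splitting further at $\abs{t-s}\asymp 1$: for $\abs{t-s}\ge 1$ the quotient is $\lesssim h^{\gamma\alpha-1}$, and for $h\le\abs{t-s}\le 1$ interpolation between $\abs{t-s}^\alpha$-control of $g_n$'s increment and its $L^\infty$-control gives quotient $\lesssim\abs{t-s}^{-\beta}\min\{\abs{t-s}^\alpha,\,h^{\gamma\alpha-1}\}^{?}$, which one checks is $\lesssim h^{(\alpha-\beta)\wedge(\gamma\alpha-1)}$. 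Carrying out this bookkeeping carefully is the only delicate part; everything else is routine.
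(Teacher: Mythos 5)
Your overall strategy --- interpolating between the $L^\infty$ rate of Proposition \ref{thm:Yhh2_comparison} and a uniform $C^\alpha$ bound on $\{Y^h\}$ --- cannot produce the exponent $\mu$, and although you sense this, your proposed fix does not resolve it. The only information your argument uses about $g_n := Y^{2^{-n}} - Y^{2^{-(n+1)}}$ is the pair of bounds $|(g_n)_{s,t}| \lesssim |t-s|^\alpha$ and $|(g_n)_{s,t}| \le 2\|g_n\|_{L^\infty} \lesssim h^{\gamma\alpha - 1}$. The best bound on $|(g_n)_{s,t}|/|t-s|^\beta$ obtainable from these two facts alone is $\min\{|t-s|^{\alpha-\beta},\, h^{\gamma\alpha-1}|t-s|^{-\beta}\}$, whose supremum over $|t-s|$ is attained at the crossover scale $|t-s| \asymp h^{\gamma - 1/\alpha}$ --- which lies inside your intermediate regime $[h,1]$ because $0 < \gamma - 1/\alpha \le 1$ --- and there equals $h^{(\gamma\alpha-1)(1-\beta/\alpha)}$. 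Since $(\gamma\alpha-1)(1-\beta/\alpha) = (\alpha-\beta)(\gamma - \tfrac1\alpha) \le \alpha-\beta$ and $(\gamma\alpha-1)(1-\beta/\alpha) = (\gamma\alpha-1)\tfrac{\alpha-\beta}{\alpha} < \gamma\alpha-1$, this is in general strictly smaller than $\mu$. Your assertion that the regime $h \le |t-s| \le 1$ yields a quotient $\lesssim h^{(\alpha-\beta)\wedge(\gamma\alpha-1)}$ is therefore false, and no repositioning of thresholds can repair it: near the crossover scale the minimum of your two bounds genuinely exceeds $h^{\mu}|t-s|^\beta$.

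The missing ingredient is structural information about where the two schemes are in the same phase of the splitting. The paper's proof partitions each coarse mesh interval $[t_j,t_{j+1}]$ into quarters. On the first quarter both schemes propagate by the \emph{same} Euler increment of $X$, so the difference of increments is $|f(Y^n_{t_j}) - f(Y^{n+1}_{t_j})|\,|X_{s',t'}| \lesssim \|\Delta Y\|_{L^\infty}|t-s|^\alpha \lesssim 2^{-(\gamma\alpha-1)n}|t-s|^\alpha$ --- a \emph{product} of the two smallness factors, which interpolation between the two seminorms can never see. On the mismatched quarters (one scheme in the Euler phase, the other in the $Z$ phase) the increment difference really is of size $|t-s|^\alpha$ with no small prefactor, but those sub-intervals have length $\le h$, whence $|t-s|^{\alpha-\beta} \le C h^{\alpha-\beta}$; this is where $\alpha-\beta$ enters $\mu$. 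Finally, for $[s,t]$ spanning many mesh intervals, the grid-to-grid portion is not controlled by the $L^\infty$ bound (which loses a factor $|t-s|^{-\beta}$ when $|t-s|$ is only of order $h$) but by telescoping the one-step error $|\Delta Y_{t_j,t_{j+1}}| = O(h^{\gamma\alpha})$ extracted from \eqref{eq:DeltaY_1} in the proof of Proposition \ref{thm:Yhh2_comparison}, which gives $\lesssim |t-s|\,h^{\gamma\alpha-1} \lesssim |t-s|^\beta h^{\gamma\alpha-1}$. Your uniform $C^\alpha$ bound and your treatment of the regime $|t-s| \le h$ are fine, but without this case analysis tied to the phases of the scheme the stated rate is out of reach.
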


\begin{proof}
Let us modify our notation slightly so that instead 
of writing $Y^{2^{-n}}$ for $Y^h$, {\em within this proof} 
we shall write $Y^n$.
For $ 0 \le s \le t \le T$, set
\begin{align*}
\mathcal{V}_{s,t}^n := \big| \big(Y^n_t - Y^{n + 1}_t\big)
- \big(Y^n_s - Y^{n + 1}_s\big)\big|.
\end{align*}

Let $t_j := j 2^{-n}$. 
Fix $n$ and assume that $s,t \in [0,T]$ such that for some $j$, 
and $[s,t]$ be a sub-interval of one of the intervals 
$[t_{j + k/4}, t_{j + {k + 1}/4}]$, $k \in \{0,1,2,3\}$.
Then $\mathcal{V}_{s,t}^n$ takes one of three forms. 
For $k = 0$, by Proposition \ref{thm:Yhh2_comparison}, 
\begin{align*}
\mathcal{V}_{s,t}^n& =\big| \big(f(Y^n_{t_j}) - f(Y^{n + 1}_{t_j}) \big)X_{s',t'} \big|
	\le C 2^{-(\gamma \alpha - 1)n} \abs{t - s}^{\alpha}, 
\end{align*}
where $s' := t_j + 2(s - t_j)$, $t' := t_j + 2(t - t_j)$, 
to account for running the scheme at ``twice the speed'' 
as in \eqref{eq:joinedup}. The important thing is that 
$|t - s| \sim |t' - s'|$ for any $n$. Let us additionally set 
$s'' = t_j + 2(s - t_{j + 1/4})$ and $t'' = t_j + 2(t - t_{j + 1/4})$. 
The case $k = 1$ (and similarly, $k = 2$) prevents 
$\mathcal{V}^n_{s,t}/|t - s|^\alpha$ 
from being bounded by $\sim 2^{- (\gamma \alpha - 1)n}$:
\begin{align*}
\mathcal{V}_{s,t}^n &= f(Y^n_{t_j}) X_{s',t'} 
	- Z(Y^{n + 1}_{t_{j + 1/4} })_{t_j,t''} +  Z(Y^{n + 1}_{t_{j + 1/4} })_{t_j,s''}  \\
& = f(Y^n_{t_j}) X_{s',t'} 
	- Z(Y^{n + 1}_{t_{j + 1/4} })_{t_j,t''} +  Z(Y^{n + 1}_{t_{j + 1/4} })_{t_j,s''} +    Z(Y^{n + 1}_{t_{j + 1/4} })_{s'',t''}  \\
&\quad\,\, + f(Y^{n + 1}_{t_{j + 1/4}}) \nabla f(Y^{n + 1}_{t_{j + 1/4}}) X_{t_j,s''}\otimes X_{s'',t''}   
	-    Z(Y^{n + 1}_{t_{j + 1/4} })_{s'',t''} \\
&\quad\,\,-  f(Y^{n + 1}_{t_{j + 1/4}}) \nabla f(Y^{n + 1}_{t_{j + 1/4}}) X_{t_j,s''}\otimes X_{s'',t''}     \\
&\quad\,\,	\le C|s - t|^{\gamma \alpha}
	+ \big|f(Y^n_{t_j}) X_{s',t'} \big| + 
	\big| Z(Y^{n + 1}_{t_{j + 1/4}})_{s'',t''}\big|\\
&\quad\,\,\qquad	  
	 + \big| \nabla f(Y^{n + 1}_{t_{j + 1/4}}) f(Y^{n + 1}_{t_{j + 1/4}})  X_{t_j,s''}\otimes X_{s'',t''}\big|
\le C  \abs{t - s}^{\alpha} \le C 2^{-(\alpha - \beta) n}\abs{t - s}^\beta.
\end{align*}
And finally, for $k = 3$,
\begin{align*}
\mathcal{V}_{s,t}^n & =  \big|Z(Y^{n + 1}_{t_{j + 1/2} })_{t_j,t''} 
	-  Z(Y^{n + 1}_{t_{j + 1/2} })_{t_j,s''}\\
&\qquad	-  Z(Y^{n + 1}_{t_{j + 3/4} })_{t_{j + 1/2},t''}  
	+ Z(Y^{n + 1}_{t_{j + 3/4} })_{t_{j + 1/2},s''}\big| \\
& \le C |t - s|^{\gamma \alpha} +\big| Z(Y^{n}_{t_{j}})_{s'',t''}
	- Z(Y^{n + 1}_{t_{j}})_{s'',t''}\big|\\
&\quad\,\, + \big|\nabla f(Y^{n}_{t_j}) f(Y^{n}_{t_j}) X_{s'',t''}X_{t_j,s''} 
	-\nabla f(Y^{n + 1}_{t_j}) f(Y^{n + 1}_{t_j}) X_{s'',t''}X_{t_{j + 1/2},s''} \big|\\
&	\le C |t - s|^{\gamma \alpha} 
	+ C \abs{t - s}^{2\alpha}
	+ C 2^{-\alpha n} \abs{t - s}^{\alpha}
\le C2^{-\alpha n}\abs{t - s}^{\alpha}.	
\end{align*}

Any interval $[s,t] \subset [0,T]$ can be decomposed 
into an almost disjoint union of intervals $[t_j, t_{j + 1}]$ 
and possibly two terminal intervals $[s, t_j]$ and $[t_j, t]$.
Therefore, 
$$
\sup_{[s,t] \subseteq [0,T]}  \frac{\mathcal{V}_{s,t}}{|t - s|^\beta} 
	\lesssim 2^{-(\alpha - \beta)n} \vee 2^{-(\gamma \alpha - 1)n}.
$$

\end{proof}

\subsection{Existence}\label{sec:sew}

It remains to show that the limit $Y$ is a solution 
in the sense of Davie (i.e., Definition \ref{def:solution}). 
This can be done abridging the argument of 
\cite[Claim in Lemma 2.4]{Dav2008}, included below 
for completeness. Following Davie, set
$$
J_{k m}^h := Y^h_{t_k,t_m}  - f(Y^h_{t_k}) X_{t_k,t_m} 
	- Z(Y^h_{t_k})_{t_k,t_m}.
$$
Obviously, $J^h_{kk} = 0$, and by  \eqref{eq:Z_xy}, 
$J^h_{k,k + 1} = O(h^{\gamma \alpha})$. 

For any $k \le \ell \le m$,
\begin{align*}
J^h_{k m} &= J^h_{k \ell} + J^h_{\ell m}  	
	+  f(Y^h)_{t_k,t_\ell}  X_{t_k,t_m}
	- \delta Z(Y^h_{t_k})_{t_k,t_\ell,t_m}\\
&\quad\,\, - \big[Z(Y^h_{t_k})_{t_\ell,t_m}
	- Z(Y^h_{t_\ell})_{t_\ell,t_m}\big].
\end{align*}
By manipulations similar to \eqref{eq:A3_estm}, 
and using \eqref{eq:obs2}, we find that
\begin{align*}
\abs{J^h_{k m}} &\le\abs{J^h_{k \ell}} + \abs{J^h_{\ell m}} 
	 + B \abs{t_m - t_k}^{\gamma \alpha},
\end{align*}
for some absolute constant $B > 0$, dependent 
only on $f$ and $Z$. Then by induction (on $m- k$), 
$$
\abs{J^h_{k m}} \le L \big(\abs{t_\ell - t_k}^{\gamma \alpha} 
	+ \abs{t_m - t_{\ell + 1}}^{\gamma \alpha}\big) 
	+ B \abs{t_m - t_k}^{\gamma \alpha}.
$$
With a judicious choice of $\ell$, we can enforce 
\begin{align*}
\abs{t_\ell - t_k} ,\abs{t_m - t_{\ell + 1}} \le \frac12 \abs{t_m - t_k}.
\end{align*}
Recall that $\gamma \alpha > 1$. 
By choosing $L$ sufficiently  large such that 
\begin{align*}
L 2^{1-\gamma \alpha} + B \le L,
\end{align*}
we find $\abs{J^h_{k m}}  \le L \abs{t_m - t_k}^{\gamma \alpha}$. 
In particular, $L$ is independent of $h$, $k$ and $m$. 
This shows that our limits are in fact solutions in 
the sense of Definition \ref{def:solution}.

\section{A further remark on convergence}\label{sec:unique}

In Section \ref{sec:wellposed} we inspected the convergence of a 
very particular subsequence of $\{Y^h\}_{h > 0}$.
One might be tempted to think that arguments 
similar to those in Proposition \ref{thm:Yhh2_comparison} 
can yield 
%
%
%
%
the more general result:
\begin{prop}\label{thm:eq18}
Let $q \in \mathbb{Q} \cap [1,2)$. Set 
$\Delta Y  := Y^h - Y^{h/q}$, with $Y^h$ and $Y^{h/q}$ 
recursively defined via 
\eqref{eq:apsystem} and \eqref{eq:joinedup}.
For some constant $C_q$ dependent on the 
numerator of $q$ in lowest terms,
\begin{align}\label{eq:rationalq_improve}
\big\|\Delta Y \big\|_{L^\infty([0,T])}
	\le C_q h^{\gamma\alpha - 1} 
\end{align}
\end{prop}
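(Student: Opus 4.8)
The plan is to mimic the proof of Proposition \ref{thm:Yhh2_comparison}, but now comparing the scheme run at mesh $h$ against the scheme run at mesh $h/q$ with $q = p'/p'' \in \mathbb{Q} \cap [1,2)$ in lowest terms. Writing $h = p'' \rho$, so that $h/q = p' \rho$, over a coarse interval $[t_j, t_{j+1}]$ of length $h$ the fine scheme $Y^{h/q}$ takes $p'$ steps of size $\rho' := h/p' = \rho p''/p' \cdot (1/1)$… more precisely, we should align both schemes on the common refinement of mesh $\rho := h/(p' p'')$ (or better, on the least common refinement $\mathrm{lcm}$ of the two grids, which has mesh $h/p'$ since $\gcd(p',p'') = 1$ forces the combined grid to have step $\rho = h/\mathrm{lcm}(1,q^{-1}\cdot 1)$; concretely both $t_j = jh$ and $s_k = kh/q$ are multiples of $h/p'$). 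On each such common-refinement subinterval I would Taylor-expand both $f$-increments and use the algebraic identity \eqref{eq:Z_sut} for the interlaced $Z$-terms to telescope the ``Milstein'' cross-terms $\nabla f(x) f(x) X \otimes X$ against the coboundary $\delta Z$, exactly as in \eqref{eq:A3_estm}--\eqref{eq:I2_estm}, so that the genuinely new error per coarse step is $O(h^{\gamma\alpha})$ with a constant depending on how many sub-steps must be collapsed, i.e.\ on $p'$ (the numerator of $q$).

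The key steps, in order: (i) fix the common refinement grid of mesh $\rho = h/p'$ and express $Y^h_{t_j, t_{j+1}}$ and $Y^{h/q}_{t_j,t_{j+1}}$ as sums over this grid, carefully tracking that $Z$ is evaluated at the appropriate half-step points of each scheme; (ii) reduce, via \eqref{eq:obs2}, all the $Z(Y^\cdot_{\bullet})_{s,t}$ terms to $Z$ evaluated at the left endpoint $t_j$ of the coarse interval, at the cost of $O(h^{\gamma\alpha})$; (iii) repeatedly apply \eqref{eq:Z_sut} to rewrite each nested coboundary $\delta Z(Y_{t_j})$ as Milstein cross-terms plus $O(h^{\gamma\alpha})$, and check that the cross-terms coming from the $f$-increments (the analogues of $A_3$) cancel these against the cross-terms from the $Z$-interlacing (the analogue of $I_2$), up to $O(h^{\gamma\alpha})$; (iv) collect the surviving main terms into $(f(Y^h_{t_j}) - f(Y^{h/q}_{t_j}))X_{t_j,t_{j+1}} + Z(Y^h_{t_j})_{t_j,t_{j+1}} - Z(Y^{h/q}_{t_j})_{t_j,t_{j+1}}$, giving the one-step recursion $|\Delta Y_{t_{j+1}}| \le |\Delta Y_{t_j}| + C_q|\Delta Y_{t_j}|h^\alpha + C_q|\Delta Y_{t_j}|^{\gamma-2}h^{2\alpha} + C_q h^{\gamma\alpha}$; (v) run the same induction as in Proposition \ref{thm:Yhh2_comparison}, using $\gamma^2 - 2\gamma + 2 > \gamma$ on $(1/\alpha,\infty)$ and $\Delta Y_0 = 0$, to get $\max_j |\Delta Y_{t_j}| \lesssim h^{\gamma\alpha-1}$, then fill in between grid points using the $C^\alpha$ bound on each $Y^h$ together with $\gamma\alpha - 1 \le \alpha$.

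The main obstacle is step (iii): when $q$ is not of the form $2^{-m}$-compatible, the two grids do not nest, so the coboundary telescoping is genuinely more delicate than in the dyadic case. One must expand $\delta Z$ over a chain of $p'$ (or $p' p''$) consecutive refinement intervals and show that the accumulated Milstein terms from the $f$-side exactly match those from the $Z$-side; since \eqref{eq:Z_sut} only controls the three-point coboundary at one level, iterating it produces a sum of cross-terms $\nabla f(Y_{t_j}) f(Y_{t_j}) X_{\cdot,\cdot} \otimes X_{\cdot,\cdot}$ over all splitting points of the refinement, and one needs the combinatorial identity that this sum is independent of how the interval is subdivided (a consequence of Chen's relation, here encoded in the consistency of \eqref{eq:Z_sut}). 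The number of terms in this sum is $O(p')$, which is precisely why the constant $C_q$ depends on the numerator of $q$. A secondary point to watch is that the ``$\nabla f(x) Z(x)_{s,u} X_{u,t}$'' term in \eqref{eq:Z_sut}, deemed aesthetic there, must be carried along consistently when iterating, though by \eqref{eq:Z_st} it only contributes at order $h^{(\gamma+1)\alpha} = o(h^{\gamma\alpha})$ and so is ultimately harmless.

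\begin{rem}
It is worth emphasising that \eqref{eq:rationalq_improve} does \emph{not} immediately upgrade to a statement comparing $Y^h$ and $Y^{h'}$ for two unrelated mesh sizes with a constant uniform in the ratio; the dependence of $C_q$ on the numerator of $q$ is the obstruction to extracting, by this method alone, convergence of the \emph{full} net $\{Y^h\}_{h>0}$ rather than a subsequence.
\end{rem}
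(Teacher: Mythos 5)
Your overall strategy---merge the two grids over a window on which they realign, telescope the Milstein cross-terms $\nabla f\, f\, X\otimes X$ coming from the $f$-increments against the $\delta Z$ coboundaries via \eqref{eq:Z_sut}, and then rerun the induction from Proposition \ref{thm:Yhh2_comparison}---is the right one, and your closing remark about the non-uniformity of $C_q$ in the numerator is correct. But the proposal has two genuine gaps. The first is the bookkeeping unit. With $q=k/m$ in lowest terms, over a single coarse interval $[t_j,t_{j+1}]$ the fine scheme takes exactly $q=k/m$ steps, which is not an integer: the fine grid $\{s_i=imh/k\}$ meets the coarse grid only at multiples of $mh$. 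Hence for general $j$ the value $Y^{h/q}_{t_j}$ is a mid-substep, interpolated value (via \eqref{eq:joinedup}), not the output of a completed split step, and the one-step recursion you write in (iv), with main term $(f(Y^h_{t_j})-f(Y^{h/q}_{t_j}))X_{t_j,t_{j+1}}$, is not what either scheme actually produces over $[t_j,t_{j+1}]$. The recursion must instead be run over the realignment period $[t_{nm},t_{(n+1)m}]$ of length $mh$, and the one-period error (the analogue of \eqref{eq:keyestm_qinQ}) necessarily involves $\Delta Y$ at all $k+m-1$ points of the merged grid; this is where the dependence of the constant on the numerator genuinely enters, not merely through "the number of cross-terms".

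The second, more substantive, gap is that at an intermediate merged-grid point the two schemes are out of phase: one has just finished a $Z$-substep while the other is midway through an $f$-substep. So the intermediate errors cannot be measured at the merged-grid points themselves; they must be compared at the shifted times $\hat{\tau}_\ell$ of \eqref{eq:shiftedtau}, where both approximations have been propagated by equal amounts of time in each of the two split equations, and one then needs a separate propagation estimate of the form $|\Delta Y_{\hat{\tau}_{\ell+1}}|\lesssim |\Delta Y_{\hat{\tau}_\ell}|+h^{2\alpha}$ (Lemma \ref{lem:majorising}) to chain these intermediate errors back to $|\Delta Y_{t_{nm}}|$ before the induction can close. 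Nothing in your proposal plays this role, and without it the sum of terms $O(|\Delta Y_{\hat{\tau}_\ell}|h^\alpha)$ cannot be absorbed. Relatedly, the cancellation you invoke in step (iii) is not a generic "independence of subdivision" consequence of Chen's relation: which $f$-side cross-term pairs with which $\delta Z$ coboundary depends on whether consecutive merged-grid points are coarse, fine, or two consecutive fine points with no coarse point between them (the sets $\Lambda_0$ and $\Lambda_1$), and the case analysis behind the paper's terms $H_1,\dots,H_6$ is precisely this pairing. As written, your telescoping argument would not go through without that structure.
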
 
Establishing the proposition turns out to require 
more than a slight modification on the 
proof of Proposition \ref{thm:Yhh2_comparison} 
and we dedicate the remainder of this note to 
obtaining this result, which may be of independent 
interest to the analysis of splitting schemes.
Before doing so, we mention that 
there is no real loss of generality in assuming $q \in [1,2)$ 
as Proposition \ref{thm:Yhh2_comparison} itself allows us to 
move up and down the dyadic scale. Moreover, 
Proposition \ref{thm:eq18} has a slightly less restrictive 
formulation as it implies that for any fixed $n$, 
\begin{align*}
\|Y^h - Y^{h/q^n}\|_{L^\infty} 
\le \sum_{k < n}\| Y^{h/q^{k + 1}} -  Y^{h/q^k}\|_{L^\infty} 
\le C_q  \frac{q^{\gamma \alpha - 1}}{q^{\gamma \alpha - 1} - 1} 
	h^{\gamma \alpha - 1}.
\end{align*}

\subsection{Proof of  
	\eqref{eq:rationalq_improve}}
Let $1 \le q = k/m \le 2$ in lowest terms.
Let $t_j := j h$ and let $s_j := j h/q$.  Designate by  
$\mathcal{T} := \{\tau_\ell\}_{\ell = 1}^{k + m - 1}$
 the collection, indexed in an ascending order, 
containing all times $nmh < s_j, t_j \le  (n + 1) mh$. 
As $k/m$ expresses $q$ in lowest terms, the time-steps 
\begin{align}\label{eq:T_sT_t_defin}
\mathcal{T}_s := \{s_{nk + j}\}_{j = 1}^{k - 1} \quad\text{ and }\quad
\mathcal{T} _t := \{t_{nm + j}\}_{j = 1}^{m}
\end{align}
are disjoint as we have elected to remove the 
end-point from $\mathcal{T}_s$. Together, 
$\mathcal{T}_s \cup \mathcal{T}_t$ is $\mathcal{T}$. 
For convenience, we additionally take $\tau_0 := t_{nm}$.

A final notational construct before heading into 
our proof, for a point 
$\tau_{\ell - 1} \in \mathcal{T}$, 
let $\hat{\tau}_\ell\le \tau_{\ell - 1} + h/2$ 
if $\tau_{\ell - 1} \in \mathcal{T}_t$ 
(or $\hat{\tau}_\ell\le \tau_{\ell - 1} + h/(2q)$ 
if $\tau_{\ell - 1} \in \mathcal{T}_s$) be the point for which 
\begin{align}\label{eq:shiftedtau}
\tau_\ell = \tau_{\ell - 1} + 2(\hat{\tau}_\ell - \tau_{\ell - 1}).
\end{align}
\begin{figure}
\includegraphics[width = 2.3in]{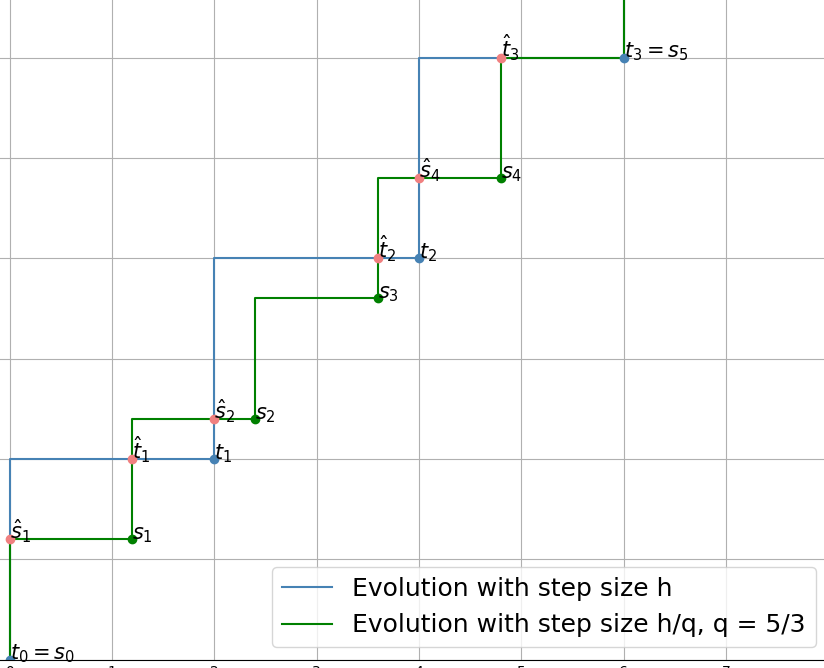}
\caption{Schematic plot of the splitting scheme, 
with vertical traversal signifying evolution by the 
first equation, and horizontal traversal signifying 
evolution under the second equation.}
\label{fig:splitup1}
\end{figure}
This accounts for running the scheme 
at ``twice the speed'', sped up linearly 
(see \eqref{eq:joinedup}, and pictorally, 
Figure \ref{fig:splitup1}), so that for $\tau_{\ell - 1} \in \mathcal{T}_t$, 
$$
Y^a_{\hat{\tau}_\ell} = Y^h_{\tau_{\ell  - 1}} + f(Y^h_{\tau_{\ell  - 1}}) 
	X_{\tau_{\ell - 1},\tau_{\ell}},
$$
with $h/q$ replacing $h$ if $\tau_{\ell - 1} \in \mathcal{T}_s$.
By linearity of the speed-up, we clearly also have 
(if $\tau_\ell \in \mathcal{T}_t$) 
\begin{align*}
Y^h_{\hat{\tau}_\ell} = Y^h_{\tau_{\ell} - h/2}
	+  Z(Y^h_{\tau_{\ell} - h/2})_{\tau_{\ell} - h,\tau_{\ell - 1}},
\end{align*}
again with $h/q$ replacing $h$ if $\tau_\ell \in \mathcal{T}_s$. 

Our key estimate is analogous to \eqref{eq:iteration}:
\begin{lem}
Let $\Lambda_1$ denote the collection of points 
$\tau_\ell \in \mathcal{T}_s$ such that 
$\tau_{\ell - 1} \in \mathcal{T}_s$ 
(see also Remark \ref{rem:Lambda0} below).  
With the notation established as immediately 
foregoing, we have the estimate:
\begin{align}\label{eq:keyestm_qinQ}
\Delta Y_{t_{nm},t_{(n + 1)m}} 
 = O(h^{\gamma\alpha})
 	+ O(|\Delta Y_{t_{nm}}|^{\gamma - 2} h^{2\alpha})
 	+ \sum_{\tau_\ell \in \mathcal{T} \backslash \Lambda_1}
 	O(|\Delta Y_{\hat{\tau}_{\ell}}| h^\alpha),
\end{align}
where the implicit constants depend on 
$k + m$.
\end{lem}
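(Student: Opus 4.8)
The plan is to mimic the telescoping structure of the proof of Proposition~\ref{thm:Yhh2_comparison}, but now tracking the mismatch of grid points between the $Y^h$-scheme (driven by the partition $\mathcal T_t$) and the $Y^{h/q}$-scheme (driven by $\mathcal T_s$) inside a single coarse block $[t_{nm},t_{(n+1)m}]$. First I would expand $Y^h_{t_{nm},t_{(n+1)m}}$ as a sum of $m$ elementary increments of the form $f(Y^h_{t_{nm+j}})X_{t_{nm+j},t_{nm+j+1}} + Z(Y^h_{t_{nm+j+1/2}})_{t_{nm+j},t_{nm+j+1}}$, and similarly expand $Y^{h/q}_{t_{nm},t_{(n+1)m}}$ into $k$ elementary increments along $\mathcal T_s$. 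Subtracting, I obtain $\Delta Y_{t_{nm},t_{(n+1)m}}$ as a single sum indexed by the merged partition $\mathcal T = \mathcal T_s\cup\mathcal T_t$; each sub-interval $[\tau_{\ell-1},\tau_\ell]$ contributes an $f$-type term and a $Z$-type term, and one of the two schemes is ``coarse'' across $[\tau_{\ell-1},\tau_\ell]$ while the other has a genuine increment there.

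The heart of the argument is the local (one-$\tau$-interval) estimate. For each $\ell$, I would compare the coarse scheme's contribution on $[\tau_{\ell-1},\tau_\ell]$ against the fine scheme's, exactly as $I_1$ and $I_2$ were handled in Proposition~\ref{thm:Yhh2_comparison}: split off the leading difference $\bigl(f(Y^h_{\cdot}) - f(Y^{h/q}_{\cdot})\bigr)X_{\tau_{\ell-1},\tau_\ell}$, which is $O(|\Delta Y_{\hat\tau_\ell}|h^\alpha)$ (after the standard $O(h^{(\gamma-2)\alpha})$-type argument-shift corrections from \eqref{eq:obs2}), and then use the second-order Taylor expansion of $f$ together with the algebraic condition \eqref{eq:Z_sut} to see that the $A_3$-type cross terms $\nabla f\, f\, X\otimes X + \nabla f\, Z\, X$ produced by the $f$-increment are cancelled by the $\delta Z$ produced by the interlaced $Z$-step — this is precisely the mechanism of \eqref{eq:A3_estm} and \eqref{eq:I2_estm}. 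What is left over on each $\tau$-interval is $O(h^{\gamma\alpha})$ plus an $O(|\Delta Y_{\hat\tau_\ell}|^{\gamma-2}h^{2\alpha})$ contribution from \eqref{eq:Z_xy}. Summing over the $O(k+m)$ intervals in the block gives \eqref{eq:keyestm_qinQ}, with the implicit constants depending on $k+m$ because that many intervals are summed; the $h^{2\alpha}$-error need only be recorded once (not per interval) since $|\Delta Y_{\hat\tau_\ell}|$ differs from $|\Delta Y_{t_{nm}}|$ by lower-order amounts across the block.

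The subtle bookkeeping — and I expect this to be the main obstacle — is the role of $\Lambda_1$, the set of $\tau_\ell\in\mathcal T_s$ whose predecessor $\tau_{\ell-1}$ is also in $\mathcal T_s$. On such an interval \emph{both} endpoints belong to the fine ($h/q$) grid, so the $Y^{h/q}$-scheme is advancing by a \emph{full} elementary step while the $Y^h$-scheme is merely being linearly interpolated (by the speed-up \eqref{eq:joinedup}) with no $f$- or $Z$-update of its own; consequently the would-be leading term $\bigl(f(Y^h)-f(Y^{h/q})\bigr)X$ does not arise in the same way and such intervals do \emph{not} generate an $O(|\Delta Y_{\hat\tau_\ell}|h^\alpha)$ term — hence the restriction of the sum to $\mathcal T\setminus\Lambda_1$. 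Getting this distinction exactly right, and checking that the $\delta Z$/cross-term cancellation still goes through at the boundaries between $\mathcal T_s$-runs and $\mathcal T_t$-runs (where the ``which scheme is coarse'' label flips), is where the care is needed; Figure~\ref{fig:splitup1} is the right picture to keep in mind.

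Once \eqref{eq:keyestm_qinQ} is in hand, the passage to the global bound \eqref{eq:rationalq_improve} should follow a Gr\"onwall/induction-over-blocks argument in the spirit of the induction closing Proposition~\ref{thm:Yhh2_comparison}: one posits $|\Delta Y_{t_{nm}}|\le C_q\, nm\, h^{\gamma\alpha}$, inserts it into the recursion obtained by summing \eqref{eq:keyestm_qinQ} and controlling $|\Delta Y_{\hat\tau_\ell}|$ in terms of $|\Delta Y_{t_{nm}}|$ plus one block's worth of increments, uses $\gamma^2-3\gamma+2>0$ to absorb the $h^{2\alpha}$ and $h^\alpha$ corrections for small $h$, and concludes $\max_j|\Delta Y_{t_j}| = O(h^{\gamma\alpha-1})$; the off-grid points are handled by $|Y^a_{s,t}|\lesssim|t-s|^\alpha$ as before, with $a\in\{h,h/q\}$.
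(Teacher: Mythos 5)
Your outline follows the same broad strategy as the paper (expand both schemes over the merged grid $\mathcal{T}$, freeze $Z$-arguments via \eqref{eq:Z_xy} to get the $O(|\Delta Y_{t_{nm}}|^{\gamma-2}h^{2\alpha})$ term, kill second-order cross terms against coboundaries via \eqref{eq:Z_sut}, pass to the shifted points $\hat{\tau}_\ell$, and sum over the $O(k+m)$ subintervals), but the step you describe as the mechanism — an interval-by-interval comparison on the merged partition in which the $A_3$-type cross term cancels ``the $\delta Z$ produced by the interlaced $Z$-step'' on that same interval — does not go through as stated, and it is exactly the part that constitutes the proof. A single merged subinterval $[\tau_{\ell-1},\tau_\ell]$ carries no coboundary of its own: the $\delta Z$ defects only appear when a full step of one scheme (length $h$ or $h/q$) is cut by an interior point of the other grid, so each $\delta Z$ straddles two — and, when a coarse step contains two fine points (the $\Lambda_0$ configuration), three — merged subintervals, and the cross term that cancels it via \eqref{eq:Z_sut} is produced by the $f$-difference sitting on a \emph{neighbouring} subinterval. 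This is the content of the pairing in the paper's proof: $H_1$ against $A^q_2$ (cf.\ \eqref{eq:Aq2_ZZZ}), $H_2$ against $H_5$, and $H_3+H_4$ against $H_6$ in \eqref{eq:Ah2}. In particular, on a $\Lambda_0$ configuration the coarse $Z$-term splits into three pieces and yields \emph{two} coboundaries, and absorbing the second one requires the genuinely second-order term $H_4=\sum f(Y^{h/q})_{\tau_{\ell+2},\tau_{\ell+1}}X_{\tau_{\ell+2},\tau_{\ell+3}}$, a term your per-interval template never generates.

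Relatedly, your explanation for restricting the sum in \eqref{eq:keyestm_qinQ} to $\mathcal{T}\setminus\Lambda_1$ is not the actual mechanism. On a subinterval ending at a point of $\Lambda_1$ the $Y^h$-scheme is not ``merely interpolated with no $f$-update'': its coarse increment is decomposed over the merged grid in $A^h_1$ (see \eqref{eq:Ah1}), so a term $f(Y^h_{\tau_j})X_{\tau_{\ell+2},\tau_{\ell+3}}$ with $\tau_{\ell+2}\in\Lambda_1$ does appear and a leading $f$-difference does arise there. The reason no $O(|\Delta Y_{\hat{\tau}}|h^\alpha)$ is attached to $\Lambda_1$ points is a deliberate re-attribution: the paper compares $f(Y^h_{\tau_\ell})$ with $f(Y^{h/q}_{\tau_{\ell+1}})$ at the shifted point of the $\Lambda_0$ predecessor and lets the increment $X_{\tau_{\ell+1},\tau_{\ell+3}}$ span both subintervals (this is $H_3$), while the leftover mismatch $f(Y^{h/q})_{\tau_{\ell+2},\tau_{\ell+1}}$ is precisely $H_4$ above. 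Without this re-attribution the index set $\mathcal{T}\setminus\Lambda_1$, and hence the count of $\Delta Y_{\hat{\tau}_\ell}$-terms that the later induction and Lemma \ref{lem:majorising} rely on, is not justified. Your closing remarks (freezing the $Z$-arguments once per block, cf.\ \eqref{eq:A3_qinQ}, and the final induction) are fine, but they presuppose that this combinatorial cancellation has been carried out.
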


\begin{rem}\label{rem:Lambda0}
Since $h/q \le h$, if $\tau_\ell \in \mathcal{T}_t$, then 
$\tau_{\ell - 1},\tau_{\ell + 1} \in \mathcal{T}_s$. That is, there is at most 
one point $t_{nm + j'} = \tau_\ell \in \mathcal{T}$ between 
any pair $s_{nk + j}$ and $s_{nk + j + 1}$. 
We shall also find it convenient to define 
\begin{align}\label{eq:Lambda0_defin}
\Lambda_0 = \{\tau_\ell \in \mathcal{T}_s : \tau_{\ell + 1} \in \mathcal{T}_s\}.
\end{align}
Pictorially, in Figure \ref{fig:splitup1}, 
$s_2$ is the only member of $\Lambda_0$ 
over the time step $t_0$ to $t_5$. 
We also have then that $\tau_\ell \in \Lambda_0$ 
if and only if $\tau_{\ell + 1} \in \Lambda_1$. 
\end{rem}

\begin{proof}

We can write the increment for $Y^{h/q}$ over 
$k$ time steps as:
\begin{align}\label{eq:Yq_incr_qinQ}
\begin{aligned}
Y^{h/q}_{s_{nk},s_{(n + 1)k}}
&=\sum_{j = 0}^{k - 1} \Big(f(Y^{h/q}_{s_{nk + j}}) 
	X_{s_{nk + j},s_{nk + j + 1}}\\
&\qquad\qquad\,\, 
+ Z(Y^{h/q}_{s_{nk + j + 1/2}})_{s_{nk + j},s_{nk + j + 1}}\Big) 
 = A^q_1 + A^q_2 + A^q_3, 
\end{aligned}
\end{align}
where
\begin{align*}
A^q_1 &:=  \sum_{j = 0}^{k - 1} f(Y^{h/q}_{s_{nk + j}}) 
	\sum_{s_{nk + j} \le \tau_\ell < s_{nk + j + 1}} 
	X_{\tau_\ell,\tau_{\ell + 1}}, \\
A^q_2 &:= \sum_{j = 0}^{k - 1}
	 \Big( Z(Y^{h/q}_{s_{nk + j + 1/2}})_{s_{nk + j},s_{nk + j + 1}}
	  - \sum_{s_{nk + j} \le \tau_\ell < s_{nk + j + 1}} 
	 Z(Y^{h/q}_{s_{nk + j + 1/2}})_{\tau_{\ell},\tau_{\ell + 1}} \Big),\\
A^q_3 &:=   \sum_{j = 0}^{k - 1}
 \sum_{s_{nk + j} \le \tau_\ell < s_{nk + j + 1}} 
	 Z(Y^{h/q}_{s_{nk + j + 1/2}})_{\tau_{\ell},\tau_{\ell + 1}}.
\end{align*}
We now make two observations. First, by \eqref{eq:Z_xy}, 
we can always write $A^q_3$ as
\begin{align}\label{eq:Aq3}
A^q_3 =  \sum_{\ell = 0}^{km - 1}  
	Z(Y^{h/q}_{s_{nk}})_{\tau_{\ell},\tau_{\ell + 1}} + O(h^{\gamma \alpha}).
\end{align}
We can subtract these from similar terms in 
the increment $Y^{h}(t_{(n + 1)m}) - Y^{h}(t_{nm})$ later 
in \eqref{eq:A3_qinQ}. 
Next, if $s_{nk + j} \in \Lambda_0$, 
$$
 Z(Y^{h/q}_{s_{nk + j + 1/2}})_{s_{nk + j},s_{nk + j + 1}} \\
 = \sum_{s_{nk + j} \le \tau_\ell < s_{nk + j + 1}} 
 	 Z(Y^{h/q}_{s_{nk + j + 1/2}})_{\tau_{\ell},\tau_{\ell + 1}},
$$
as there is only one $\tau_\ell = s_{nk + j}$ in the 
interval $[s_{nk + j}, s_{nk + j + 1})$, 
and the $j$th summand of $A^q_2$ is nought. 
When $s_{nk + j} \not\in \Lambda_0$, the $j$th summand of $A^q_2$ is 
\begin{align}\label{eq:Aq2_ZZZ}
\begin{aligned}
&\delta Z(Y^{h/q}_{s_{nk + j + 1/2}})_{s_{nk + j}, \tau_\ell,s_{nk + j + 1}}\\
&\qquad\overset{\eqref{eq:Z_sut}}{=} 
f(Y^{h/q}_{s_{nk + j}}) \nabla f(Y^{h/q}_{s_{nk + j}}) 
	X_{\tau_\ell,s_{nk + j + 1}}
	\otimes X_{s_{nk + j},\tau_\ell} + O(h^{3\alpha}).
\end{aligned}
\end{align}
On $\mathcal{T}_s \backslash \Lambda_0$, 
we will need to subtract summands in $A^q_1$ from 
corresponding terms in the decomposition for 
$Y^h_{t_{nm},t_{(n + 1)m}}$  in order to 
handle \eqref{eq:Aq2_ZZZ}.\vspace{.1cm}

Turning now to just that decomposition, 
as in \eqref{eq:Yq_incr_qinQ}, we write the 
increment $Y^h_{t_{nm},t_{(n + 1)m}}$ 
as  $A^h_1 + A^h_2 +A^h_3$, with
\begin{align*}
A^h_1 & := \sum_{j = 0}^{m - 1} f(Y^h_{t_{nm + j}}) 
	\sum_{t_{nm + j} \le \tau_\ell < t_{nm + j + 1}} 
	X_{\tau_\ell,\tau_{\ell + 1}}, \\
A^h_2 &:= \sum_{j = 0}^{m - 1} 
	\Big( Z(Y^h_{t_{nm + j + 1/2}})_{t_{nm + j},t_{nm + j + 1}} 
	 - \sum_{t_{nm + j} \le \tau_\ell < t_{nm + j + 1}} 
	 Z(Y^h_{t_{nm + j + 1/2}})_{\tau_{\ell},\tau_{\ell + 1}} \Big),\\
A^h_3&:= \sum_{\ell = 0}^{km - 1}  
	Z(Y^h_{t_{nm}})_{\tau_{\ell},\tau_{\ell + 1}} + O(h^{\gamma \alpha}).
\end{align*}

Using $t_{nm} = nmh = nkh/q = s_{nk}$ and \eqref{eq:Z_xy},
subtracting $A^q_3$ in \eqref{eq:Aq3} from $A^h_3$, we find
\begin{align}\label{eq:A3_qinQ}
|A^h_3 - A^q_3| 
	\le C |\Delta Y_{t_{nm}}|^{\gamma - 2} h^{2\alpha} + Ch^{\gamma \alpha},
\end{align}

Recall the definitions of $\mathcal{T}_s$ and $\mathcal{T}_t$ 
in \eqref{eq:T_sT_t_defin}, and that of $\Lambda_0$ 
in \eqref{eq:Lambda0_defin}.  For $A^h_1$, we can 
decompose the difference as follows:
\begin{align}\label{eq:Ah1}
\begin{aligned}
A^h_1 - A^q  _1
& = \sum_{\tau_\ell \in \mathcal{T}_t} f(Y^h_{\tau_\ell}) 
	\big[X_{\tau_\ell,\tau_{\ell + 1}}
	+ X_{\tau_{\ell + 1},\tau_{\ell + 2}}
	+ X_{\tau_{\ell + 2},\tau_{\ell + 3}}
	\mathds{1}_{\{\tau_{\ell+ 1} \in \Lambda_0\}}
	\big]\\
&\quad\,\, -  \sum_{\tau_\ell \in \mathcal{T}_s} f(Y^{h/q}_{\tau_\ell}) 
	\big[X_{\tau_\ell,\tau_{\ell + 1}}
	 + X_{\tau_{\ell + 1},\tau_{\ell + 2}} 
	\mathds{1}_{\{\tau_{\ell} \not\in \Lambda_0\}}\big]\\
& = H_1 + H_2 + H_3 + H_4, 
\end{aligned}
\end{align}
where
\begin{align*}
H_1 &:= \sum_{\substack{\tau_\ell \in \mathcal{T}_s \\ 
		\tau_{\ell  + 1} \in \mathcal{T}_t}} 
	\big(f(Y^h_{\tau_{\ell + 1}}) - f(Y^{h/q}_{\tau_\ell})\big) 
	X_{\tau_{\ell + 1},\tau_{\ell +2}},\\
H_2 & := \sum_{\substack{\tau_\ell \in \mathcal{T}_t \\ 
		\tau_{\ell  + 1} \in \mathcal{T}_s \backslash\Lambda_0}} 
	\big(f(Y^h_{\tau_\ell}) - f(Y^{h/q}_{\tau_{\ell + 1}})\big) 
	X_{\tau_{\ell + 1},\tau_{\ell + 2}},\\
H_3 &:= \sum_{\substack{\tau_\ell \in \mathcal{T}_t \\ 
		\tau_{\ell  + 1} \in \Lambda_0} }
	\big(f(Y^h_{\tau_\ell}) - f(Y^{h/q}_{\tau_{\ell + 1}})\big) 
	X_{\tau_{\ell + 1},\tau_{\ell +3}},\\
H_4 & :=  \sum_{\substack{\tau_\ell \in \mathcal{T}_t \\ 
		\tau_{\ell  + 1} \in \Lambda_0} }
	f(Y^{h/q})_{\tau_{\ell + 2},\tau_{\ell + 1}} 
	X_{\tau_{\ell + 2},\tau_{\ell +3}}.
\end{align*}
Next recall the shifted points 
$\hat{\tau}_\ell$ defined in \eqref{eq:shiftedtau}. 
The aim of all subsequent manipulations is to 
consider the difference $\Delta Y$ at these shifted 
points where $Y^h$ and $Y^{h/q}$ have each been 
propagated by the same amount of time in the 
first equation (and in the second equation) 
of the splitting scheme.
We re-arrange the terms in $H_1$:
\begin{align*}
 H_1 &= \sum_{\tau_{\ell  + 1} \in \mathcal{T}_t} 
	f(Y^h)_{\hat{\tau}_{\ell + 1},\tau_{\ell + 1}} 
	X_{\tau_{\ell + 1},\tau_{\ell + 2}}
	+ \sum_{\tau_{\ell  + 1} \in \mathcal{T}_t} 
	f(Y^{h/q})_{\tau_\ell,\hat{\tau}_{\ell + 1}}
	X_{\tau_{\ell + 1},\tau_{\ell + 2}}\\
&\quad\,\, + \sum_{\tau_{\ell  + 1} \in \mathcal{T}_t} 
	\big(f(Y^h_{\hat{\tau}_{\ell + 1}}) - f(Y^{h/q}_{\hat{\tau}_{\ell + 1}})\big) 
	X_{\tau_{\ell + 1},\tau_{\ell + 2}}\\
& = O(h^{\gamma\alpha}) + \sum_{\tau_{\ell  + 1} \in \mathcal{T}_t} 
	O(|\Delta Y_{\hat{\tau}_{\ell + 1}}| h^\alpha) \\
	&\qquad\qquad\,\, \,+ \sum_{\tau_{\ell  + 1} \in \mathcal{T}_t} 
	\nabla f(Y^{h/q}_{\tau_\ell})  f(Y^{h/q}_{\tau_\ell}) 
	X_{\tau_\ell,\tau_{\ell + 1}}\otimes
	X_{\tau_{\ell + 1},\tau_{\ell + 2}}.
\end{align*}
And specifically, in the estimate for $H_1$ 
above, we used the implication $\tau_{\ell + 1} 
\in \mathcal{T}_t \implies \tau_\ell \in \mathcal{T}_s$ 
(see Remark \ref{rem:Lambda0}), and 
\begin{align*}
\big|f(Y^h)_{\hat{\tau}_{\ell + 1},\tau_{\ell + 1}} \big|
& \le C \big| Y^h_{\hat{\tau}_{\ell + 1}, \tau_{\ell + 1}}\big| \\
&= C \big| Z(Y^{h}_{\tau_{\ell + 1} - h/2})_{\tau_{\ell + 1} - h, \tau_{\ell + 1}} 
- Z(Y^{h}_{\tau_{\ell + 1} - h/2})_{ \tau_{\ell + 1} - h, \tau_{\ell - 1}} \big|\\
&\overset{\eqref{eq:Z_st}}{=} O(h^{2\alpha}).
\end{align*}
By comparison with the terms of 
$A^q_2$ in \eqref{eq:Aq2_ZZZ}, we find
\begin{align}\label{eq:H1Aq2}
H_1 - A^q_2  =  O(h^{\gamma\alpha}) 
	+ \sum_{\substack{\tau_\ell \in \mathcal{T}_s \\ 
		\tau_{\ell  + 1} \in \mathcal{T}_t}} 
	O(|\Delta Y_{\hat{\tau}_{\ell + 1}}| h^\alpha).
\end{align}

The remaining $H_i$, $i = 2,3,4$,  are used to 
tame  $A^h_2$. Similar calculations yield:
\begin{align*}
H_2 & = O(h^{\gamma\alpha}) 
	+ \sum_{\substack{\tau_\ell \in \mathcal{T}_t \\ 
		\tau_{\ell  + 1} \in \mathcal{T}_s\backslash \Lambda_0}} 
	O(|\Delta Y_{\hat{\tau}_{\ell + 1}}| h^\alpha) \\
	&\qquad\qquad\,\,\, - \sum_{\substack{\tau_\ell \in \mathcal{T}_t \\ 
		\tau_{\ell  + 1} \in \mathcal{T}_s\backslash \Lambda_0}} 
	\nabla f(Y^{h}_{\tau_\ell})  f(Y^{h}_{\tau_\ell}) 
	X_{\tau_\ell,\tau_{\ell + 1}}\otimes
	X_{\tau_{\ell + 1},\tau_{\ell + 2}},\\
H_3 & = O(h^{\gamma\alpha}) 
	+ \sum_{	\tau_{\ell  + 1} \in \Lambda_0} 
	O(|\Delta Y_{\hat{\tau}_{\ell + 1}}| h^\alpha) \\
	&\qquad\qquad\,\,\, - \sum_{\tau_{\ell  + 1} \in \Lambda_0} 
	\nabla f(Y^{h}_{\tau_\ell})  f(Y^{h}_{\tau_\ell}) 
	X_{\tau_\ell,\tau_{\ell + 1}}\otimes
	X_{\tau_{\ell + 1},\tau_{\ell + 2}}, \\
H_4 & = O(h^{\gamma\alpha}) - \sum_{\tau_{\ell  + 1} \in \Lambda_0} 
	\nabla f(Y^{h}_{\tau_\ell})  f(Y^{h}_{\tau_\ell}) 
	X_{\tau_{\ell + 1}, \tau_{\ell + 2}}	\otimes
	X_{\tau_{\ell + 2},\tau_{\ell + 3}}.
\end{align*}

We turn finally to $A^h_2$ itself. As in $A^q_2$ above, 
the summands of $A^h_2$ take one of two forms:
\begin{align}\label{eq:Ah2}
A^h_2 = H_5 + H_6,
\end{align}
where
\begin{align*}
H_5 &:= \sum_{\substack{\tau_\ell \in \mathcal{T}_t\\
	\tau_{\ell + 1} \in \mathcal{T}_s \backslash \Lambda_0}} 
	\delta Z(Y^h)_{\tau_{\ell}, \tau_{\ell + 1},\tau_{\ell} + h},\\
H_6 &:= \sum_{\tau_{\ell + 1} \in \Lambda_0} 
	\big(Z(Y^h)_{\tau_{\ell}, \tau_{\ell} + h} 
	- Z(Y^h)_{\tau_{\ell}, \tau_{\ell + 1}} \\
&\qquad\qquad\qquad - Z(Y^h)_{\tau_{\ell + 1}, \tau_{\ell + 2}}
	- Z(Y^h)_{\tau_{\ell + 2}, \tau_{\ell} + h}\big)\\
& = \sum_{\tau_{\ell + 1} \in \Lambda_0} 
	\big(\delta Z(Y^h)_{\tau_{\ell}, \tau_{\ell + 1}, \tau_{\ell} + h}
	+ \delta Z(Y^h)_{\tau_{\ell + 1}, \tau_{\ell + 2}, \tau_{\ell} + h}\big)\\
\end{align*}
We deliberately omitted the argument for $Y^h$ 
above as, again, shifting that argument incurs only an error 
of $O(h^{\gamma \alpha})$, and we do not need  
detailed bookkeeping here. Up to this error, 
we get 
\begin{align*}
H_5 + H_2 &= O(h^{\gamma\alpha}) 
	+ \sum_{\substack{\tau_\ell \in \mathcal{T}_t \\ 
		\tau_{\ell  + 1} \in \mathcal{T}_s\backslash \Lambda_0}} 
O(|\Delta Y_{\hat{\tau}_{\ell + 1}}| h^\alpha), \\
H_6 + H_3 + H_4 &=  O(h^{\gamma\alpha}) 
	+ \sum_{	\tau_{\ell  + 1} \in \Lambda_0} 
	O(|\Delta Y_{\hat{\tau}_{\ell + 1}}| h^\alpha).
\end{align*}
Putting these together with \eqref{eq:A3_qinQ}, 
\eqref{eq:Ah1}, \eqref{eq:H1Aq2}, and \eqref{eq:Ah2}, 
we arrive at \eqref{eq:keyestm_qinQ}.
\end{proof}

In order to close estimates, we need to 
be able to control $\Delta Y$ at shifted 
times $\hat{\tau}_\ell$. 
\begin{lem}\label{lem:majorising}
Let $\mathcal{T} = \mathcal{T}_t \cup \mathcal{T}_s$ 
be as defined in \eqref{eq:T_sT_t_defin}. 
Let $\Lambda_1$ be constituted of points 
$\tau_\ell \in \mathcal{T}_s$ for which 
$\tau_{\ell - 1} \in \mathcal{T}_s$. For 
$\tau_\ell \in \mathcal{T}$, $\tau_{\ell + 1}
	\not \in \Lambda_1$, we have the estimate
\begin{align*}
|\Delta Y_{\hat{\tau}_{\ell + 1}}| 
\lesssim |\Delta Y_{\hat{\tau}_{\ell - 1}}| 
\mathds{1}_{\tau_{\ell} \in \Lambda_1} 
 +  |\Delta Y_{\hat{\tau}_\ell}| 
\mathds{1}_{\tau_{\ell} \not\in \Lambda_1}  + h^{2\alpha},
\end{align*}
and in particular, the implicit constant is independent of $\ell$. 
\end{lem}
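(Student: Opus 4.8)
The plan is a short case analysis on the local pattern of $\tau_{\ell-1},\tau_\ell,\tau_{\ell+1}$ inside $\mathcal{T}_s\cup\mathcal{T}_t$. By Remark~\ref{rem:Lambda0}, the hypothesis $\tau_{\ell+1}\notin\Lambda_1$ is equivalent to $\tau_\ell\notin\Lambda_0$, and $\mathcal{T}$ never contains two consecutive points of $\mathcal{T}_t$; this leaves only the (a priori possible) configurations (i) $\tau_\ell\in\mathcal{T}_t$, whence $\tau_{\ell\pm1}\in\mathcal{T}_s$ and $\tau_\ell\notin\Lambda_1$; (ii) $\tau_\ell\in\mathcal{T}_s$ with $\tau_{\ell-1},\tau_{\ell+1}\in\mathcal{T}_t$, again $\tau_\ell\notin\Lambda_1$; and (iii) $\tau_\ell\in\Lambda_1$, i.e.\ $\tau_{\ell-1},\tau_\ell\in\mathcal{T}_s$, which together with $\tau_\ell\notin\Lambda_0$ forces $\tau_{\ell+1}\in\mathcal{T}_t$. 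In cases (i)--(ii) I would anchor the estimate at the shifted point $\hat\tau_\ell$ and in case (iii) at $\hat\tau_{\ell-1}$, which reproduces the two indicators in the statement. Throughout I would use, exactly as in the proof of Proposition~\ref{thm:Yhh2_comparison}, that $\|\Delta Y\|_{L^\infty}\lesssim1$ (so every factor $|\Delta Y_\cdot|^{\gamma-2}$ produced by \eqref{eq:Z_xy} is harmless); that $|X_{s,t}|+|g(Y^h)_{s,t}|+|g(Y^{h/q})_{s,t}|\lesssim h^\alpha$ for $g\in C^1_b$ and $|t-s|\le h$; that $|Z(\cdot)_{s,t}|\lesssim h^{2\alpha}$ for $|t-s|\le h$ by \eqref{eq:Z_st}; and that by \eqref{eq:obs2} the argument of any such $Z$ may be translated by $O(h)$ in time at the cost of $O(h^{\gamma\alpha})=o(h^{2\alpha})$.

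In each of the three cases the argument is the same. Using the recursions \eqref{eq:apsystem}, the interpolation \eqref{eq:joinedup}, and the definition \eqref{eq:shiftedtau} of the shifted points, I would expand $Y^h_{\hat\tau_{\ell+1}}$ and $Y^{h/q}_{\hat\tau_{\ell+1}}$ by tracing each scheme forward from the anchor. The shifts $\hat\tau$ are rigged so that, between the anchor and $\hat\tau_{\ell+1}$, the $h$-flow and the $h/q$-flow perform the same ``drift'' step $f(\cdot)\,X$ \emph{against the same $X$-increment} and the same $Z$-increments, modulo (a) one unmatched $Z$-increment over an interval of length $\le h$ (size $O(h^{2\alpha})$), (b) $O(h)$-in-time shifts of the base points of the drift terms and of the arguments of the $Z$-terms, and (c) a bounded number of such corrections. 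Taking the difference $\Delta Y_{\hat\tau_{\ell+1}}=Y^h_{\hat\tau_{\ell+1}}-Y^{h/q}_{\hat\tau_{\ell+1}}$: the ``value at the anchor'' contributions combine into $\Delta Y_{\hat\tau_{\ell-1}}$ (case (iii)) or $\Delta Y_{\hat\tau_\ell}$ (cases (i)--(ii)); all $Z$-terms collapse, via (b), \eqref{eq:Z_xy}, \eqref{eq:Z_st} and $\|\Delta Y\|_\infty\lesssim1$, into $O(h^{2\alpha})$; and each matched drift term contributes a difference $\big(f(Y^h_{\sigma})-f(Y^{h/q}_{\sigma'})\big)X_{a,b}$ with $|b-a|\le h$ and $|\sigma-\sigma'|\le h$, hence, since $f\in C^1_b$ and $Y^{h/q}$ is $\alpha$-Hölder over $O(h)$-scales, at most $\big(|\Delta Y_{\sigma}|+Ch^\alpha\big)Ch^\alpha$. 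Finally $\sigma$ is a grid point at $O(h)$ time-distance from the anchor shifted point, reached from it by a bounded number of further partial drift and/or $Z$ steps, so $|\Delta Y_\sigma|\le|\Delta Y_{\mathrm{anchor}}|+Ch^\alpha$. Collecting,
\begin{align*}
\big|\Delta Y_{\hat\tau_{\ell+1}}\big|
\le (1+Ch^\alpha)\big|\Delta Y_{\hat\tau_{\ell-1}}\big|\,\mathds{1}_{\tau_\ell\in\Lambda_1}
+(1+Ch^\alpha)\big|\Delta Y_{\hat\tau_\ell}\big|\,\mathds{1}_{\tau_\ell\notin\Lambda_1}
+Ch^{2\alpha},
\end{align*}
with $C$ depending only on $f$, $Z$, $\alpha$ (uniformly in $q\in[1,2]$) and not on $\ell$ or $n$; for $h$ small this is the claimed bound.

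The only genuine obstacle is the geometric bookkeeping underlying the middle paragraph: one must determine, in each of the three configurations, into which half (the ``drift'' half or the ``$Z$'' half) of which $h$- or $h/q$-step each of $\hat\tau_{\ell-1},\hat\tau_\ell,\hat\tau_{\ell+1}$ falls, and then read off the relevant pieces of \eqref{eq:apsystem}--\eqref{eq:joinedup}; Figure~\ref{fig:splitup1} is the guide, and a short computation with $\tau_\ell=\tau_{\ell-1}+2(\hat\tau_\ell-\tau_{\ell-1})$ together with $h/q\in[h/2,h]$ pins the phase down in every case. This is also exactly what forces the branch on $\Lambda_1$: when $\tau_{\ell-1},\tau_\ell$ are consecutive points of $\mathcal{T}_s$, the point $\hat\tau_\ell$ lands at the midpoint of a \emph{completed} $h/q$-drift, at which instant the $h$-flow is part-way through a drift-then-$Z$ cycle of its own, so the two flows cannot be matched cheaply at $\hat\tau_\ell$ and one must reach back to the previous such midpoint $\hat\tau_{\ell-1}$ to recover a common anchor. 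Everything else — the estimates on differences of $f$ and of $Z$ and on the sizes of $X$- and $Z$-increments — is already present in the proofs of Proposition~\ref{thm:Yhh2_comparison} and of the key estimate \eqref{eq:keyestm_qinQ}.
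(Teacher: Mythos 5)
Your proposal is correct and follows essentially the same route as the paper: the same three-case analysis on the local pattern of $\mathcal{T}_s$/$\mathcal{T}_t$ points (the paper details only the case $\tau_\ell\in\mathcal{T}_t$, $\tau_{\ell+1}\in\mathcal{T}_s$ and waves at the other two), the same anchoring of $\Delta Y$ at the shifted points $\hat\tau_\ell$ (resp.\ $\hat\tau_{\ell-1}$ when $\tau_\ell\in\Lambda_1$), and the same decomposition of the increment into a matched drift difference $\big(f(Y^h_{\cdot})-f(Y^{h/q}_{\cdot})\big)X_{\tau_\ell,\tau_{\ell+1}}$ giving $O(|\Delta Y_{\hat\tau_\ell}|h^\alpha)+O(h^{2\alpha})$ plus residual $Z$-increments each bounded by $h^{2\alpha}$ via \eqref{eq:Z_st}. (The appeal to $\|\Delta Y\|_{L^\infty}\lesssim 1$ is unnecessary here, since \eqref{eq:Z_st} alone handles the $Z$-terms; relying on it would risk circularity outside the induction.)
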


\begin{proof}
Let us compute this bound for $\tau_{\ell} \in \mathcal{T}_t $ 
and $\tau_{\ell + 1} \in \mathcal{T}_s$. The remaining two cases: (i) 
$\tau_{\ell } \in \Lambda_1$, and (ii) $\tau_{\ell} 
	\in\mathcal{T}_s \backslash (\Lambda_0 \cup \Lambda_1)$, 
depend on similar calculations and 
we will not belabour the point.  

In the present case we can use the simplification 
$\tau_{\ell + 1} - h/q = \tau_{\ell - 1} \in \mathcal{T}_s$. 
Using other properties of $\hat{\tau}$ summarised 
in \eqref{eq:shiftedtau}, we get
$
\Delta Y_{\hat{\tau}_\ell, \hat{\tau}_{\ell + 1}} = G_1 + G_2, 
$
where
\begin{align*}
G_1 & := \big(f(Y^h_{\tau_\ell}) - f(Y^{h/q}_{\tau_{\ell - 1}})\big) 
	X_{\tau_\ell,\tau_{\ell + 1}},\\
G_2 &:=  Z(Y^h_{\tau_{\ell} - h/2})_{\tau_{\ell} - h, \tau_{\ell}} 
	- Z(Y^h_{\tau_{\ell} -h/2})_{\tau_{\ell} - h, \tau_{\ell - 1}}\\
& \quad\,\, 
	- Z(Y^{h/q}_{\tau_{\ell + 1} - h/(2q)})_{\tau_{\ell - 1}, \tau_{\ell + 1}} 
	+ Z(Y^{h/q}_{\tau_{\ell + 1} - h/(2q)} )_{\tau_{\ell}, \tau_{\ell + 1}}. 
\end{align*}
We consider only $G_1$ below, since the remainder 
are $O(h^{2\alpha})$:
\begin{align*}
G_1 &= f(Y^h)_{\hat{\tau}_\ell,\tau_\ell}X_{\tau_\ell,\tau_{\ell + 1}}
	+\big( f(Y^h_{\hat{\tau}_\ell}) -  f(Y^{h/q}_{\hat{\tau}_\ell})\big)
	X_{\tau_\ell,\tau_{\ell + 1}} 
	+ f(Y^{h/q})_{\tau_{\ell - 1},\hat{\tau}_\ell}
	X_{\tau_\ell,\tau_{\ell + 1}}\\
& = O(|\Delta Y_{\hat{\tau}_\ell}| h^\alpha) + O(h^{2\alpha}),
\end{align*}
where the $h^{2\alpha}$ asymptotic controls 
the first and last summands in the line preceding. This establishes the lemma.
\end{proof}

Wrapping up briefly:
\begin{proof}[Proof of Proposition \ref{thm:eq18}]
Applying Lemma \ref{lem:majorising} to 
\eqref{eq:keyestm_qinQ}, 
along with the straightforward end-point estimate 
$\big|\Delta Y_{\hat{\tau}_1}\big| \lesssim \big|\Delta Y_{t_{nm}}\big|$, 
for some constant $C_{k,m} \ge 1$, 
we arrive at the following estimate: 
\begin{align*}
\big|\Delta Y_{t_{nm},t_{(n + 1)m}} \big|
 \le C_{k,m} h^{\gamma\alpha}
 	+ C_{k,m} \big|\Delta Y_{t_{nm}}\big|^{\gamma - 2} h^{2\alpha}
 	+ C_{k,m} \big|\Delta Y_{t_{nm}}\big| h^\alpha.
\end{align*}
This give us \eqref{eq:rationalq_improve} via the 
induction argument presented in the proof of Lemma 
\ref{thm:Yhh2_comparison} proceeding from 
\eqref{eq:iteration}.
\end{proof}

\bibliographystyle{plain}

\end{document}